\begin{document}
\newtheorem{theorem}{Theorem}
\newtheorem{corollary}{Corollary}
\newtheorem{example}{Example}
\newtheorem{main}{Main Theorem}
\newtheorem{lemma}{Lemma}
\newtheorem{proposition}{Proposition}
\newtheorem{conjecture}{Conjecture}
\newtheorem*{problem}{Problem}
\theoremstyle{definition}
\newtheorem{definition}{Definition}
\newtheorem{remark}{Remark}
\newtheorem*{notation}{Notation}
\newtheorem{manualtheoreminner}{Theorem}

\newcommand{\Tr}{\mathrm{Tr}}
\newcommand{\ZZ}{\mathbb{Z}}
\newcommand{\RR}{\mathcal{R}}
\newcommand{\FF}{\mathbb{F}}
\newcommand{\TT}{\mathcal{T}}
\newcommand\GF{\mathrm{GF}}
\newcommand\GR{\mathrm{GR}}
\newcommand\CCo{\mathrm{Cos}}
\newcommand\Cos{\mathrm{Cos}}
\newcommand\BCCo{\mathrm{BCCo}}
\newenvironment{manualtheorem}[1]{%
  \renewcommand\themanualtheoreminner{#1}%
  \manualtheoreminner
}{\endmanualtheoreminner}
\renewcommand\baselinestretch{1.2}

\title{Quasi-cyclic perfect codes in Doob graphs and special partitions of Galois rings%
\thanks{This is the accepted peer-reviewed version of the paper published in the IEEE
Transactions on Information Theory, 
\url{https://doi.org/10.1109/TIT.2023.3272566}
\copyright~2023 IEEE. Personal use is permitted, but
republication/redistribution requires IEEE permission}%
\thanks{This research is supported by Natural Science Foundation of China (12071001).
The work of D.\,S.\,Krotov was carried out within the framework of the state contract
of the Sobolev Institute of Mathematics (Project FWNF-2022-0017).}%
}

\author{%
Minjia~Shi,
Xiaoxiao~Li,
Denis~S.~Krotov, 
Ferruh~{\"O}zbudak
\thanks{Minjia Shi is with the Key Laboratory of Intelligent Computing and Signal Processing, Ministry of Education, School of Mathematical Sciences, Anhui University, Hefei, 230601, China; State Key Laboratory of Information Security Institute of Information Engineering, Chinese Academy of Sciences, Beijing, 100093.}
\thanks{Xiaoxiao Li is with School of Mathematical Sciences, Anhui University, Hefei, 230601, China.}
\thanks{Denis S. Krotov is with Sobolev Institute of Mathematics, pr. Akademika Koptyuga 4, Novosibirsk 630090, Russia (e-mail: krotov@math.nsc.ru).}
\thanks{Ferruh \"{O}zbudak is affiliated with Faculty of Engineering and Natural Sciences, Sabanc{\i} University, 34956, Istanbul, and  Middle East Technical University, 06800, Ankara, Turkey (e-mail: ferruh.ozbudak@sabanciuniv.edu).}}

\maketitle

\begin{abstract}   \boldmath
The Galois ring GR$(4^\Delta)$ is the residue ring $Z_4[x]/(h(x))$, where $h(x)$ is a basic primitive polynomial of degree $\Delta$ over $Z_4$. For any odd $\Delta$ larger than $1$, we construct a partition of GR$(4^\Delta) \backslash \{0\}$ into $6$-subsets of type $\{a,b,-a-b,-a,-b,a+b\}$ and $3$-subsets of type $\{c,-c,2c\}$ such that the partition is invariant under the multiplication by a nonzero element of the Teichmuller set in GR$(4^\Delta)$ and, if $\Delta$ is not a multiple of $3$, under the action of the automorphism group of GR$(4^\Delta)$.

As a corollary, this implies the existence of quasi-cyclic additive $1$-perfect codes of index $(2^\Delta-1)$ in $D((2^\Delta-1)(2^\Delta-2)/{6}, 2^\Delta-1 )$ where $D(m,n)$ is the Doob metric scheme on $Z^{2m+n}$.
\end{abstract}

\begin{IEEEkeywords} \boldmath
Doob graph, Galois ring, $1$-perfect code, quasicyclic code.
\end{IEEEkeywords}

%
\IEEEpeerreviewmaketitle

\section{Introduction}
%
%
%
%
\IEEEPARstart{T}{he} motivation of studying the codes in {Doob graph} is the application of
association schemes in coding theory \cite{Delsarte:1973}.
Codes in {Doob graphs} can be used for information transmission in channels with
two-dimensional or complex-valued modulation. In fact, these codes are special
cases of codes over Eisenstein--Jacobi integers, see, e.g., \cite{Huber94},
\cite{MSBG:2008}. Form the point of view of the distance-regular graphs, the
algebraic parameters of the schemes associated with Doob graphs are the same as
for the quaternary Hamming scheme.
The vertices of a {Doob graph} can be
considered as words in the mixed alphabet consisting of elements of the quotient
(modulo $4$ and modulo $2$) rings of the ring of Eisenstein--Jacobi integers, see e.g. \cite{Kro:perfect-doob}.

The \emph{Doob graph} $D(m,n)$
is the Cartesian product of $m$ copies of the Shrikhande graph~$\mathrm{Sh}$ and~$n$ copies of the complete graph $K_4$ on $4$ vertices.
The \emph{Shrikhande graph} is the Cayley graph
on $\ZZ_4^2$ with the connecting set
$\{\pm(0,1),\pm(1,0),\pm(1,1)\}$,
where two vertices are adjacent if and only if their difference is in the connecting set.
The complete graph~$K_4$ in our theory
is a Cayley graph on~$\ZZ_4$ or~$\ZZ_2^2$.
A set of vertices of a graph is called
a \emph{$1$-perfect code}
if every vertex is at distance at most~$1$ from exactly one code vertex.

The study of perfect codes in Doob graphs was initiated by Koolen and Munemasa in
\cite{KoolMun:2000}, where they constructed $1$-perfect codes
in the two Doob graphs of diameter $5$, $D(1,3)$ and $D(2,1)$.
In~\cite{Kro:perfect-doob}, linear $1$-perfect codes are characterized,
where a code is linear if it is a submodule of the module
$\GR(4^2)^m \times \GF(2^2)^n$, naturally associated with the vertex set
of $D(n,m)$; such codes can be constructed if and only if
\begin{equation}
 \label{eq:GaDe}
n=\frac{4^{\gamma+\delta}-1}3,\qquad
m=\frac{4^{\gamma+2\delta}-4^{\gamma+\delta}}6
\end{equation}
for some integers $\gamma\ge0$ and $\delta>0$ (case $\delta=0$ also has a sense,
but it corresponds to the Hamming graph $D(0,n)=H(n,4)$ over the $4$-ary alphabet).
However, the sphere-packing necessary condition on the existence of
$1$-perfect codes requires only the divisibility of the number $16^m\cdot 4^n$ of the vertices of the graph by the cardinality $1+6m+3n$ of a radi\-us\mbox{-$1$} ball.
The family of parameters satisfying this condition is much wider than the ones meeting~\eqref{eq:GaDe}.
For example, all $10$ Doob graphs
of diameter $21$,
$D(1,19)$, $D(2,17)$, \ldots , $D(10,1)$, have radius-$1$
balls of size $64$, dividing $4^{21}$,
while only
one of them, $D(8,5)$, falls into restrictions~\eqref{eq:GaDe}.
As was shown in~\cite{Kro:Doob-exist},
the sphere-packing
divisibility
condition is sufficient for the existence
of unrestricted $1$-perfect codes in a Doob graph,
where ``unrestricted'' means that the code is not implied
to have any algebraic structure.
Motivated by a possibility to cover by algebraic constructions more parameters than the parameters
of linear codes, the concept of additive
codes was suggested in~\cite{Kro:perfect-doob}.
A Doob graph $D(m,n)$ inherits the structure 
of the module $\ZZ_4^{2m+n''}\times \ZZ_2^{2n'}$,
where $n=n'+n''$, and a code is additive if it is additively closed, or, equivalently, forms a submodule of this module.
Necessary conditions
\begin{multline}
\label{eq:addt}
2m+n'+n'' = \frac{2^{\Gamma+2\Delta}-1}3,
\quad
  3n'+n'' = 2^{\Gamma+\Delta}-1,
  \\
 0\leq n'' \leq  2^{\Delta}-1,\quad
  n''\ne 1, \quad   \Gamma\ge0, \quad
  \Delta\ge 2
\end{multline}
were given and first examples
constructed in~\cite{Kro:perfect-doob},
while constructions developed in~\cite{SHK:additive}
cover the whole three-parameter family of Doob graphs
$D(m,n'+n'')$ satisfying \eqref{eq:addt}.
Apart from recursive constructions solving the existence problem for
additive $1$-perfect codes in Doob graphs,
three nice codes were described in \cite{Kro:perfect-doob}, \cite{SHK:additive}.
They are in the graphs
\begin{equation}\label{eq:D00}
D\Big((2^{\Delta}-1)\cdot\frac{2^{\Delta}-2}{6},\  0+(2^{\Delta}-1)\Big)
\end{equation}
with $\Delta=3,5,7$ and have a quasicyclic permutational
automorphism of order~$2^\Delta - 1$.
Generalizing these quasicyclic additive $1$-perfect codes to the case of an arbitrary odd~$\Delta$
is the main goal of the current paper.

Let us describe why the parameters \eqref{eq:D00}
are special. An additive $1$-perfect code~$C$
is a submodule of $\ZZ_4^{2m+n''}\times \ZZ_2^{2n'}$,
and the quotient
$Q = (\ZZ_4^{2m+n''}\times \ZZ_2^{2n'})/ C$ is  isomorphic to $\ZZ_4^\Delta\times \ZZ_2^\Gamma$.
It is a free module if $\Gamma = 0$.
In this case,
$Q$ can be associated with the Galois ring
$\GR(4^\Delta)$, and $C$ can potentially have a coordinate-permutation
automorphism of order~$2^\Delta-1$ associated to the multiplication by
a {nonzero element of the Teichmuller set}
in~$\GR(4^\Delta)$.
This can only happen if the numbers
$m$, $n'$, $n''$
of coordinates
of each type are divisible by~$2^\Delta-1$,
which immediately leads to~$n'=0$ and
to the parameters shown in~\eqref{eq:D00}.

Let us consider several motivation points
of our study. At first, cyclic and quasicyclic codes are
of theoretical and practical importance
because of the possibility
to represent the code in a compact way and,
as a corollary, to create coding and decoding algorithms that are efficiently realized in hardware.
Codes in mixed alphabets seem to be rarely
used in practice.
However, the number~$n''$
of the last-group of ``Hamming'' coordinates
is relatively small with respect to the
number~$m$
of ``Shrikhande'' coordinates, and shortening a
$1$-perfect code in all these $n''$~coordinates
makes it a non-mixed code in~$D(m,0)$,
non-perfect, but still very close to optimal.
We also mention the result
of
\cite{BorFer:1cyclic} which states
that cyclic $1$-perfect codes
over the mixed $\ZZ_2\ZZ_4$-alphabet with the Lee metric do not exist, except for one example. Taking into account some similarity between codes over
the mixed $\ZZ_2\ZZ_4$-alphabet
and codes in Doob metric,
we can conjecture that cyclic $1$-perfect
codes hardly exist in Doob graphs,
and the quasicyclic structure
is the best possible.

At second, additive codes we consider are related with
a very interesting class of partitions
of a module over~$\ZZ_4$.
To construct a check matrix for an additive
$1$-perfect code in $D(m,n'+n'')$,
one needs to partition the non-zero elements of
$\ZZ_4^\Delta \times \FF_2^\Gamma$
into $m$~sixtuples of type $\{\pm a, \pm b, \pm(a+b) \}$,
$n''$~triples of type $\{ a, 2a, -a \}$,
and $n'$~triples of type $\{2a, 2b, 2a+2b \}$
(the last group is empty in our current research).
In the case $m=n''=\Delta = 0$, such partitions
are a special case of \emph{spreads}, widely studied in discrete geometry, see e.g.~\cite{Johnson:spreads}.
For all admissible collections
of parameters $(m,n',n'')$,
required paritions were constructed
in~\cite{SHK:additive},
using three combinatorial recursive constructions.
That technique is useless
if we want to construct quasi-cyclic
perfect codes with
the corresponding partitions
invariant under the multiplicative action
of the Teichmuller set in~$\GR(4^\Delta)$.
So, we use totally different
algebraic technique to prove
the existence of partitions with this
nice property (moreover, in the case
$\Delta\not\equiv 0\bmod 3$, our partitions are also invariant under the action of the automorphism group of~$\GR(4^\Delta)$).
Note that predescribing an automorphism
group, often based on multiplications in the field or the ring associated to the ambient space, is an effective approach
for finding different combinatorial
structures computationally,
see e.g.~\cite{BEOVW:q-Steiner},
where discovering of the first known
subspace Steiner structures is described.
In our study, we also began with
solving small cases computationally,
but finally found the theoretical approach,
presented in this paper.

Finally, as was noted in~\cite{KoolMun:2000}, the codes with the parameters dual to the parameters of $1$-perfect codes
are singleton codes, also known
as tight $2$-designs,
and with properly defined inner product
and dual graph
(the details can be found in~\cite{Krotov:ISIT2019:Doob}),
additive $1$-perfect codes in Doob graphs
correspond to additive singleton codes.
Moreover, the duality preserves
permutational automorphisms, which means that we construct quasicyclic singleton codes as well.

The structure of the paper is as follows. The next section compiles the background necessary to the forthcoming sections. Special partitions of Galois rings are constructed in Section~\ref{s:part}.
Section~\ref{s4} contains the main result of this paper: a construction
of quasicyclic $1$-perfect codes in Doob graphs. Section~\ref{s5} concludes the paper.

\section{Background material}\label{s2}
In this section, we introduce some preliminary results from three parts as follows.
\subsection{Galois Rings}\label{s2.1}
Let $\Delta$ be odd. If $h(x)$ is a basic primitive polynomial of degree $\Delta$ over~$\ZZ_4$ and $\xi$ is a root of~$h(x)$, then any element in the residue class ring $\ZZ_4[x]/(h(x))$ can be written as $h_0+h_1\xi+\cdots+h_{\Delta-1}\xi^{\Delta-1}$, which could also be viewed as the vector $(h_0,h_1,\ldots,h_{\Delta-1})$ over $\ZZ_4$.
The ring $\ZZ_4[x]/(h(x))$ is called the Galois ring with $4^\Delta$ elements and is denoted by~$\GR(4^\Delta)$.
The Teichmuller set $\TT=\{0,1,\xi,\xi^2,\ldots,\xi^{2^\Delta-2}\}$ is a set of representatives of the residue field  $\FF_{2^\Delta}\simeq \GR(4^\Delta)/2\GR(4^\Delta)$. It is known that $\GR(4^\Delta)=\TT\oplus 2\TT$ (2-adic decomposition of $\GR(4^\Delta)$), and that the group of units of the Galois ring is $\GR(4^\Delta)^*=\TT^*\oplus 2\TT$ with $\TT^*=\TT\setminus \{0\}$.
The following formula for the sum of elements of $\TT$ is useful.
\begin{lemma}[Yamada's formula, see, e.g.,~{\cite[Corol.\,6.9]{Wan:4ary}}]\label{Yamada}
If $c_1$, $c_2\in \TT$, and $c_1+c_2=a+2b$, $a,b \in \TT,$
then
$$a=c_1+c_2+2(c_1c_2)^{1/2},b=(c_1c_2)^{1/2},$$
where $(c_1c_2)^{1/2}$ denotes the unique element in~$\TT$ such that $((c_1c_2)^{1/2})^2=c_1c_2$.
\end{lemma}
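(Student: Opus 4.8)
The plan is to reduce everything to computing the $2^\Delta$-th power of $c_1+c_2$. I would use two standard facts about $\GR(4^\Delta)$: the $2$-adic decomposition $u=\tau_0+2\tau_1$ with $\tau_0,\tau_1\in\TT$ is unique, and the Teichm\"uller part is given by $\tau_0=u^{2^\Delta}$ (which in turn follows from expanding $(\tau_0+2\tau_1)^{2^\Delta}$ and observing that every term other than $\tau_0^{2^\Delta}=\tau_0$ carries a factor $2^{\Delta+1}=0$ in $\ZZ_4$). Granting these, from $c_1+c_2=a+2b$ with $a,b\in\TT$ we immediately get $a=(c_1+c_2)^{2^\Delta}$, and then $b$ is read off from $2b=(c_1+c_2)-a$ by uniqueness. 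So the whole lemma becomes the explicit evaluation of $(c_1+c_2)^{2^\Delta}$.

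First I would prove, by induction on $k\ge 1$, that $(c_1+c_2)^{2^k}=c_1^{2^k}+c_2^{2^k}+2(c_1c_2)^{2^{k-1}}$ in $\GR(4^\Delta)$. The case $k=1$ is just $(c_1+c_2)^2=c_1^2+2c_1c_2+c_2^2$. For the step, square the right-hand side: the cross terms involving $2\cdot c_1^{2^k}\cdot 2(c_1c_2)^{2^{k-1}}$ and the square $\bigl(2(c_1c_2)^{2^{k-1}}\bigr)^2$ all carry a factor $4=0$, so $(c_1+c_2)^{2^{k+1}}=\bigl(c_1^{2^k}+c_2^{2^k}\bigr)^2=c_1^{2^{k+1}}+c_2^{2^{k+1}}+2(c_1c_2)^{2^k}$, which is the claim at $k+1$.

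Then I would set $k=\Delta$. Since $c_1,c_2\in\TT$, $c_1^{2^\Delta}=c_1$ and $c_2^{2^\Delta}=c_2$; and writing $d=c_1c_2\in\TT$, the element $d^{2^{\Delta-1}}$ squares to $d^{2^\Delta}=d$, so $d^{2^{\Delta-1}}$ is exactly the unique square root $(c_1c_2)^{1/2}\in\TT$ named in the statement (using that squaring is a bijection on $\TT$, as $\gcd(2,2^\Delta-1)=1$). Hence $a=(c_1+c_2)^{2^\Delta}=c_1+c_2+2(c_1c_2)^{1/2}$. Finally $2b=(c_1+c_2)-a=-2(c_1c_2)^{1/2}=2(c_1c_2)^{1/2}$, because $-2=2$ in $\ZZ_4$, and uniqueness of the $2$-adic decomposition forces $b=(c_1c_2)^{1/2}$.

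I do not expect a genuine obstacle here: the statement is elementary once $\TT$ is described as the fixed set of the $2^\Delta$-th power map, and the argument is two short inductions plus the remark $-2=2$ in $\ZZ_4$. The only points needing care are bookkeeping the vanishing of the cross terms modulo $4$ and keeping the exponent $2^{k-1}$ in the inductive hypothesis correct; the identity $\tau_0(u)=u^{2^\Delta}$ is best stated as a preliminary fact (or cited) rather than reproved inline.
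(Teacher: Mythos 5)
Your proof is correct. Note that the paper does not prove this lemma at all: it is quoted as a known fact with a citation to Wan's \emph{Quaternary Codes} (Corollary 6.9), so there is no in-paper argument to compare against; your derivation is a valid, self-contained version of the standard proof, and the induction $(c_1+c_2)^{2^k}=c_1^{2^k}+c_2^{2^k}+2(c_1c_2)^{2^{k-1}}$ together with $x^{2^\Delta}=x$ on $\TT$ and $-2=2$ does everything needed. One small imprecision: in justifying $\tau_0=u^{2^\Delta}$ for $u=\tau_0+2\tau_1$, the non-leading binomial terms with $i\ge 2$ are killed by the factor $2^i$ (already $4=0$), not by $2^{\Delta+1}$; the cleanest route is simply $u^2=\tau_0^2+4(\dots)=\tau_0^2$ and then iterate. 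This does not affect the correctness of the argument.
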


We know that the kernel of the ring homomorphism
\begin{IEEEeqnarray*}{rCl}
\overline{\phantom a}: \ZZ_4[x] &\to & \ZZ_2[x] \\
\overline{h_0+h_1x+\cdots+h_nx^n} &=& \overline{h}_0+\overline{h}_1x+\cdots+\overline{h}_nx^n
\end{IEEEeqnarray*}
is the ideal
$2\ZZ_4[x]$
and the image of~$(h(x))$ under $\overline{\phantom a}$ is $(\overline{h}(x))$. Therefore the ring homomorphism induces a ring homomorphism
\begin{IEEEeqnarray*}{rCl}
\overline{\phantom a}: \ZZ_4[\xi] &\to & \ZZ_2[\overline{\xi}] \\
\overline{h_0+h_1\xi+\cdots+h_{\Delta-1}\xi^{\Delta-1}} &=& \overline{h}_0+\overline{h}_1\overline{\xi}+\cdots+\overline{h}_{\Delta-1}\overline{\xi}^{\Delta-1},
\end{IEEEeqnarray*}
where $\overline{\xi}$ is the image of~$\xi$. Actually, $\overline{\xi}$ is a root of~$(\overline{h}(x))$ and $\ZZ_2[\overline{\xi}]=\FF_{2^\Delta}$.

The generalized Frobenius map on $\GR(4^\Delta)$ defined by
$$f:\GR(4^\Delta)\rightarrow \GR(4^\Delta),
\quad
c=a+2b\mapsto f(c)=a^2+2b^2,
$$
where $a,b \in \TT$, is a ring automorphism of $\GR(4^\Delta)$. Moreover, if $\sigma$ is a ring automorphism of~$\GR(4^\Delta)$, then $\sigma=f^l$ for some $l$, $0\leq l\leq \Delta-1$, see e.g.~\cite[Chapter $6$]{Wan:4ary}.

\subsection{Trace map}
For $u\in \FF_{2^\Delta}$, the trace $\mathrm{Tr}_{\FF_{2^\Delta}/ \FF_{2}}(u)$ of~$u$ over~$\FF_{2}$ is defined by
$$\Tr_{\FF_{2^\Delta}/ \FF_{2}}(u)=u+u^2+\cdots+u^{2^{\Delta-1}}.$$
$\Tr_{\FF_{2^\Delta}/ \FF_{2}}(u)$ is the absolute trace of $u$ and simply denoted by~$\Tr(u)$.

The following lemmas are important
for us to prove the main theorems.

\begin{lemma}\label{solution1}
If $y_1,y_2\in \FF_{2^\Delta}$ and $\Tr(y_1)=\Tr(y_2)$, then $\Tr(y_1y_2)=0$ or $\Tr((y_1+1)(y_2+1))=0$.
\end{lemma}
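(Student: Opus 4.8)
The plan is to reduce the whole statement to the $\FF_2$-linearity of the trace together with the single arithmetic fact that $\Tr(1)=1$ because $\Delta$ is odd. First I would expand the second expression, $(y_1+1)(y_2+1)=y_1y_2+y_1+y_2+1$, and apply additivity of $\Tr$:
\[
\Tr\big((y_1+1)(y_2+1)\big)=\Tr(y_1y_2)+\Tr(y_1)+\Tr(y_2)+\Tr(1).
\]

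Next I would feed in the hypothesis. Since we work in characteristic $2$, the assumption $\Tr(y_1)=\Tr(y_2)$ is equivalent to $\Tr(y_1)+\Tr(y_2)=0$, so the identity above collapses to
\[
\Tr\big((y_1+1)(y_2+1)\big)=\Tr(y_1y_2)+\Tr(1).
\]
Because $\Delta$ is odd (the standing assumption of Section~\ref{s2.1}), $\Tr(1)=1+1+\cdots+1$ ($\Delta$ summands) $=1$ in $\FF_2$. Hence the two quantities $\Tr\big((y_1+1)(y_2+1)\big)$ and $\Tr(y_1y_2)$ differ by $1$, so they cannot both be nonzero; in fact exactly one of them is $0$. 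This proves the claim, and I would record the stronger ``exactly one'' form, since that complementarity is likely what is actually used downstream.

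There is essentially no obstacle here: the only non-formal ingredient is $\Tr(1)=1$, which is precisely where the global hypothesis ``$\Delta$ odd'' enters and which turns a vague constraint relating the two traces into a clean dichotomy.
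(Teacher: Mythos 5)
Your proof is correct and is essentially identical to the paper's one-line argument: expand $(y_1+1)(y_2+1)$, use linearity together with $\Tr(y_1)+\Tr(y_2)=0$, and invoke $\Tr(1)=1$ (valid since $\Delta$ is odd) to get $\Tr((y_1+1)(y_2+1))=\Tr(y_1y_2)+1$. Your observation that exactly one of the two traces vanishes is the same dichotomy the paper relies on, just stated explicitly.
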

\begin{proof}
It can be directly calculated by $\Tr((y_1+1)(y_2+1))=\Tr(y_1y_2)+\Tr(y_1+y_2)+\Tr(1)=\Tr(y_1y_2)+1$.
\end{proof}
The next fact is a special case of a well-known result, Hilbert's Theorem 90 (see, \cite[Note~3 in page~75]{LN}). We give a simple proof for the special case that we consider.
\begin{lemma}\label{solution2}
If $a\in \FF_{2^\Delta}$, then the equation
\begin{equation}\label{eq1}
 x^2+x = a
\end{equation}
has two solutions in $\FF_{2^\Delta}$ if $\Tr(a)=0$
and has no solutions  in $\FF_{2^\Delta}$ if $\Tr(a)=1$.
\end{lemma}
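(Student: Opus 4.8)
The plan is to treat this as the characteristic-$2$ incarnation of Hilbert 90 and reduce it to a dimension count for an $\FF_2$-linear map. First I would introduce $L\colon \FF_{2^\Delta}\to\FF_{2^\Delta}$, $L(x)=x^2+x$, and note that $L$ is additive: $L(x+y)=(x+y)^2+(x+y)=L(x)+L(y)$, so $L$ is an $\FF_2$-linear endomorphism of the $\Delta$-dimensional $\FF_2$-space $\FF_{2^\Delta}$. Its kernel is exactly $\{0,1\}$, since $x^2+x=x(x+1)=0$ forces $x\in\{0,1\}$; hence $|\ker L|=2$ and, by rank--nullity, $|\operatorname{Im} L|=2^{\Delta-1}$. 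Moreover, whenever $a\in\operatorname{Im} L$, the solution set of \eqref{eq1} is a coset of $\ker L$, so it has exactly two elements; equivalently, if $x_0$ solves \eqref{eq1} then so does $x_0+1$, and these are the only two solutions.

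Next I would identify $\operatorname{Im} L$ with $\ker\Tr$. The inclusion $\operatorname{Im} L\subseteq\ker\Tr$ is immediate: if $a=x^2+x$, then, using $\Tr(x^2)=\Tr(x)$ (the Frobenius $u\mapsto u^2$ permutes the conjugates of $x$ and hence fixes their sum), we get $\Tr(a)=\Tr(x^2)+\Tr(x)=0$. For the reverse inclusion it suffices to compare cardinalities: $\Tr$ is a nonzero $\FF_2$-linear functional on $\FF_{2^\Delta}$, because $u+u^2+\cdots+u^{2^{\Delta-1}}$ is a polynomial of degree $2^{\Delta-1}<2^\Delta$ and so cannot vanish identically on $\FF_{2^\Delta}$; therefore $\Tr$ is surjective onto $\FF_2$ and $|\ker\Tr|=2^{\Delta-1}$. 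Since $\operatorname{Im} L\subseteq\ker\Tr$ and both sets have size $2^{\Delta-1}$, they coincide.

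Combining the two steps gives the statement: if $\Tr(a)=0$, then $a\in\ker\Tr=\operatorname{Im} L$, so \eqref{eq1} has exactly two solutions in $\FF_{2^\Delta}$; if $\Tr(a)=1$, then $a\notin\operatorname{Im} L$, so \eqref{eq1} has none. I do not expect a genuine obstacle here; the only point needing a little care is the claim $|\ker\Tr|=2^{\Delta-1}$ (equivalently, that $\Tr$ is not identically zero), which is exactly where the degree bound on the trace polynomial is used. Everything else is the standard linear-algebra skeleton, and the argument is uniform in $\Delta$.
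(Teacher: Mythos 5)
Your proposal is correct and is essentially the paper's own argument in linear-algebra dress: the paper likewise counts that $x\mapsto x^2+x$ is two-to-one onto a set of $2^{\Delta}/2$ values of $a$ (via the solutions $x_0$ and $x_0+1$) and checks that every attained $a$ has $\Tr(a)=\Tr(x_0)+\Tr(x_0^2)=0$, concluding by comparing cardinalities. Your only addition is to make explicit that $\Tr$ is a nonzero functional so that $|\ker\Tr|=2^{\Delta-1}$, a point the paper leaves implicit.
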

\begin{proof}
The claim is straightforward from the following two observations.
\begin{enumerate}
\item [(i)] For every element~$x_0$ of $\FF_{2^\Delta}$,
both~$x_0$ and~$x_0+1$ are solutions of~(\ref{eq1}) with
$a=x_0(x_0+1)$.
On the other hand, a quadratic equation cannot have more than two solutions.
This means that (\ref{eq1}) is solvable for $2^\Delta / 2$ values of~$a$.

\item [(ii)] If $x_0$ is a solution of (\ref{eq1}), then $\Tr(a)=\Tr(x_0)+\Tr(x_0^2) =
\Tr(x_0)+\Tr(x_0)=0$.
\end{enumerate}
\end{proof}
\subsection[Additive 1-perfect codes in Doob graphs]{Additive 1-perfect codes in Doob graphs}
Denote by $D(m, n)$ the Cartesian product $\mathrm{Sh}^m\times K^n$ of $m$ copies of the Shrikhande graph $\mathrm{Sh}$ and $n$ copies of the complete $4$-vertex graph.
If $m > 0$, then $D(m, n)$ is called a \emph{Doob graph}.
(case $m = 0$ corresponds to a Hamming graph $H(n,4)=D(0,n)$).
The \emph{Shrikhande graph} $\mathrm{Sh}$ is the Cayley graph of the additive group $\ZZ^{2+}_4$ of $\ZZ^2_4$ with the connecting set $S = \{01, 30, 33, 03, 10, 11\}$.
Take the set of $(2m +2n)$-tuples
$(x_1, \ldots, x_{2m},z_1, \ldots,z_{n} )$ from $\ZZ^{2m}_4 \times \ZZ^{n}_4$
as the vertex set of $D(m, n)$.
If a code
$C \subset \ZZ^{2m}_4 \times \ZZ^{n}_4$
is closed with respect to addition,
then we say it is \emph{additive}.
An additive code is necessarily
closed with respect to multiplication by an element of
$\ZZ_4$.
So, it is in fact a submodule of
$\ZZ^{2m+n}_4$.

\begin{remark}
 In~\cite{Kro:perfect-doob}, additive codes
 are defined as a more general class:
 some of the last $n$ coordinates
 have values from $\ZZ_2^2$.
 However, as explained in the introduction,
 in our current study such coordinates are absent.
\end{remark}

A $1$-perfect code in a Doob graph $D(m,n)$
is a subset C of
$\ZZ_4^{2m}\times \ZZ_4^{n}$
which is an independent set such that
every vertex in $\ZZ^{2m}_4\times\ZZ^{n}_4\setminus C$
is adjacent to exactly one vertex in $C$.

Let $(A | A'')$ be a matrix
with rows from $\ZZ^{2m}_4\times\ZZ^{n}_4$.
It is called a \emph{check matrix}
of a code $C$ in $D(m, n' + n'')$ if
$C = \{c \in \ZZ^{2m}_4 \times \ZZ^{n}_4 :\  (A | A'' )c^{T} = \overline{0}^{T}\}$.


\begin{lemma}[{\cite[Lemma~1]{SHK:additive}}]\label{l:Aperf}
Let $(A |A'')$
be a check matrix of an additive
code $C$ in $\ZZ_4^{2m}\times \ZZ_4^{n}$.
Denote by $a_i$ the $i$th column of $(A |A'')$, and by $Q$ the linear span of all
$a_i$,
$1\le i \le 2m+n$.
The code $C$ is $1$-perfect in $D(m,n)$
if and only if the collection of the following sets
form a partition of $Q\backslash\{\overline0^T\}$:
\begin{itemize}
 \item[\rm(I)] six-element sets
 $$\{\pm a_{2i-1}, \pm a_{2i}, \pm (a_{2i-1}+a_{2i}) \},
 \quad i=1, \ldots , m,$$
 \item[\rm(II)] three-element sets
 $$\{a_{j}, 2a_{i}, -a_j \}, \quad j=2m+1, \ldots , 2m+n.$$
\end{itemize}
\end{lemma}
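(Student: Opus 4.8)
The plan is to use the standard translation between $1$-perfect codes and the syndrome map. Let $s\colon \ZZ_4^{2m+n}\to Q$ be the $\ZZ_4$-module homomorphism $s(c)=(A\mid A'')c^{T}=\sum_{i=1}^{2m+n}c_i a_i$; by the definition of $Q$ this map is surjective, and $C=\ker s$, so $s$ induces a bijection $\ZZ_4^{2m+n}/C\cong Q$ between the cosets of $C$ and the elements of $Q$. Since $\mathrm{Sh}$ is a Cayley graph on $\ZZ_4^2$ and $K_4$ is a Cayley graph on $\ZZ_4$, the product $D(m,n)$ is a Cayley graph on the additive group $\ZZ_4^{2m+n}$, so every radius-$1$ ball is a translate of $B:=B_1(\overline0^{T})$. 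Consequently $C$ is $1$-perfect if and only if the translates $c+B$, $c\in C$, partition the vertex set, which is equivalent to $B$ being a transversal of the cosets of $C$, i.e.\ to $s|_{B}\colon B\to Q$ being a bijection. Thus it suffices to show that $s|_{B}$ is a bijection onto $Q$ exactly when the sets (I) and (II) partition $Q\setminus\{\overline0^{T}\}$.

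Next I would describe $B$ explicitly from the Cartesian-product structure: two vertices of $D(m,n)$ are adjacent iff they agree in all factors but one, where they differ by an element of that factor's connecting set. Hence $B\setminus\{\overline0^{T}\}$ is the disjoint union of $m$ ``Shrikhande groups'' — the $6$ vectors obtained by placing an element of $S=\{\pm(1,0),\pm(0,1),\pm(1,1)\}$ in coordinates $2i-1,2i$ and $0$ elsewhere — and $n$ ``$K_4$ groups'' — the $3$ vectors obtained by placing an element of $\ZZ_4\setminus\{0\}=\{1,2,3\}$ in a single coordinate $j$, $2m+1\le j\le 2m+n$, and $0$ elsewhere. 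Applying $s$ coordinatewise (and using that negation commutes with $s$), the $i$-th Shrikhande group maps onto $\{\pm a_{2i-1},\pm a_{2i},\pm(a_{2i-1}+a_{2i})\}$, the $j$-th $K_4$ group maps onto $\{1\cdot a_j,2\cdot a_j,3\cdot a_j\}=\{a_j,2a_j,-a_j\}$, and $s(\overline0^{T})=\overline0^{T}$.

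Finally I would read off the equivalence. The map $s|_{B}$ is a bijection onto $Q$ iff it is surjective — i.e.\ $\{\overline0^{T}\}$ together with all the images listed above exhausts $Q$ — and injective — i.e.\ those images are pairwise disjoint, none of them contains $\overline0^{T}$, and $s$ is injective on each group separately, so that the $i$-th image genuinely has $6$ elements and the $j$-th genuinely has $3$. Taken together, these conditions are precisely the assertion that the sets (I) and (II) form a partition of $Q\setminus\{\overline0^{T}\}$, which finishes the proof.

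The step needing the most care is the first one, the equivalence ``$1$-perfect $\iff$ $s|_{B}$ bijective''. Surjectivity of $s|_{B}$ is immediately the covering condition. For injectivity: if $u,v\in B$ with $u\ne v$ and $s(u)=s(v)$, then $w:=u-v\in C\setminus\{\overline0^{T}\}$ has $\mathrm{wt}(w)=d(u,v)\le d(u,\overline0^{T})+d(\overline0^{T},v)\le 2$, so two distinct codewords lie at distance $\le 2$ and $C$ is not $1$-perfect; conversely, if $C$ contains a nonzero $w$ with $\mathrm{wt}(w)\le 2$, then $w=u-v$ for some distinct $u,v\in B$ — trivially so when $w$ is supported on two distinct factors, and when $w$ is supported on a single factor this uses that $\mathrm{Sh}$ and $K_4$ have diameter $\le 2$ (equivalently, $(\{0\}\cup S)-(\{0\}\cup S)=\ZZ_4^2$ and $\ZZ_4-\ZZ_4=\ZZ_4$), so that $w$ restricted to that factor is a difference of two elements of the radius-$1$ ball of that factor's Cayley graph — and then $s(u)=s(v)$ with $u\ne v$. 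This is the one place where the concrete connecting sets, and not merely the group structure, are used.
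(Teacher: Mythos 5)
Your proof is correct. Note that the paper does not prove this lemma at all --- it is quoted verbatim from \cite[Lemma~1]{SHK:additive} --- and your argument (syndrome map $s$, the observation that $1$-perfectness of an additive code in a Cayley graph is equivalent to the radius-$1$ ball $B$ being a transversal of the cosets of $C=\ker s$, and the explicit computation that $s$ sends the Shrikhande and $K_4$ portions of $B\setminus\{\overline0^T\}$ onto the sets (I) and (II)) is exactly the standard one used in that reference. Your closing remark that injectivity of $s|_B$ on each group is what guarantees the sets really have $6$ (resp.\ $3$) elements is the right way to read the statement, and your care with the diameter-$2$ case of the Shrikhande factor in the converse direction is the one place where the specific connecting set matters; nothing is missing.
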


In general, $Q$ is a module of type
$\ZZ_4^\Delta\times\ZZ_2^\Gamma$ fore some $\Delta$ and $\Gamma$,
but we are interested in the case $\Gamma =0$, and assume
for the convenience that $Q=\GR(4^\Delta)$.
Our goal is to find a partition of
$\GR(4^\Delta)\backslash\{0^\Delta\}$ into sets
of type (I) and (II) such that the partition is invariant
under the multiplication by $\xi$.
We set $a_j=\xi^j$, $j=2m+1, \ldots , 2m+n$, so the elements
of form $\xi^j$, $2\xi^j$, and $3\xi^j = -\xi^j$ are covered
by the sets of type (II). Then, the remaining elements
of $\GR(4^\Delta)$ have to be covered by sets of type (I).

\section[Partitions of GR(...)]{Partitions of $\GR(4^\Delta)$}\label{s:part}
Let $\RR=\GR(4^\Delta)$. Every element of~$\RR$ is uniquely represented in the form $A=X+2Y$, where $X,Y\in \TT$. If $A\not\in\TT\cup2\TT$, then $A=\xi^i(1+2\xi^j)$ for some unique~$i$ and~$j$ from
$\{0, \ldots  , 2^\Delta-2\}$. These will be denoted
by~$i(A)$ and~$j(A)$, respectively.
Let $C_i=\{i\cdot2^j \bmod 2^\Delta-1 : 0\leq j\leq \Delta-1\}$, $0\leq i\leq 2^\Delta-2$. For each $i$, the set $C_i$ is called a \emph{$2$-cyclotomic coset} modulo $2^\Delta-1$.
It is straightforward that
$C_i =C_j$ or $C_i\cap C_j=\emptyset$ for any~$i$,~$j$.
By~$\CCo$, we denote the set of all $2$-cyclotomic cosets except~$\{0\}$;
by~$\CCo(s)$, the set
$\{\mathcal{A}\in \CCo: |\mathcal{A}|=s\}$,
and by~$N_s$, its cardinality.

\begin{lemma}\label{j(-A)}
For every $A$, $B$ from $\RR\backslash
(\TT \cup 2\TT \cup 3\TT)$,
\begin{itemize}
 \item[\rm(i)] if $j(A)=j(B)$, then $j(-A)=j(-B)$;
 \item[\rm(ii)] $j(A)$ and $j(-A)$ ($j(B)$ and $j(-B)$) belong to different cosets  in $\CCo$;
 \item[\rm(iii)] if $j(A)$ and $j(B)$ are in the same coset, then
 $j(-A)$ and $j(-B)$ are in the same coset.
\end{itemize}
\end{lemma}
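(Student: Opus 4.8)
The plan is to reduce everything to an elementary computation using Yamada's formula (Lemma~\ref{Yamada}) applied to the negation map on $\RR$. Write $A=\xi^{i}(1+2\xi^{j})$ with $i=i(A)$, $j=j(A)$. Since $-1 = 3 = 1+2 = 1+2\cdot 1 \in \TT\oplus 2\TT$ with Teichm\"uller digits $X=1$, $Y=1$, I would first compute the $2$-adic expansion of $1+2\xi^{j}$ itself: applying Yamada's formula to $c_1=1$, $c_2=\xi^{j}$ gives $1+\xi^{j}=a+2b$ with $a = 1+\xi^{j}+2\xi^{j/2}$ and $b=\xi^{j/2}$ (where $\xi^{j/2}$ is the square root in $\TT$, i.e.\ the exponent is $j\cdot 2^{\Delta-1}\bmod 2^{\Delta}-1$). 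Hence $1+2\xi^{j} = (1+\xi^{j}) + \xi^{j} = a + 2b + \xi^{j}$; iterating Yamada once more to combine $a$ and $\xi^{j}$ in the Teichm\"uller part, and tracking the carry into the $2$-part, yields a closed form $-A = -\xi^{i}(1+2\xi^{j}) = \xi^{i'}(1+2\xi^{j'})$ where $j'=j(-A)$ depends on $j$ (and, via the cyclotomic structure, on $\Delta$) but \emph{not} on $i$. This immediately gives (i): if $j(A)=j(B)$ then the same computation produces $j(-A)=j(-B)$, because $j(-A)$ is a function of $j(A)$ alone.

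For (ii), the key observation is that the map $j\mapsto j(-A)$ just described is, after reduction, essentially the map $\gamma$ sending $\xi^{j}\mapsto$ (the Teichm\"uller square-root type expression); I would show $j(-A)\equiv j(A)\cdot 2^{k} + c \pmod{2^{\Delta}-1}$ is \emph{not} possible, i.e.\ that $j(A)$ and $j(-A)$ never lie in the same $2$-cyclotomic coset. The clean way is to argue by contradiction: if $j(-A)\in C_{j(A)}$, then $j(-A)=2^{t}j(A)$ for some $t$, and substituting into the explicit formula relating $\xi^{j(-A)}$ to $\xi^{j(A)}$ forces an algebraic identity in $\FF_{2^{\Delta}}$ that, after applying the trace and Lemma~\ref{solution2}, has no solution — this is where the oddness of $\Delta$ (hence $\Tr(1)=1$) enters. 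Actually the simpler route: observe $-A$ and $A$ satisfy $A + (-A) = 0$, and track what this says about the residue-field images; since $\overline{A}=\overline{-A}=\overline{\xi}^{i(A)}$, the distinction between $A$ and $-A$ is entirely in the $2$-part, and one shows directly from the carry computation that $\xi^{j(-A)} = \xi^{j(A)} / (1+\xi^{j(A)})^{2}$ or a similar M\"obius-type expression whose orbit under Frobenius (i.e.\ squaring) cannot meet the orbit of $\xi^{j(A)}$.

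Part (iii) then follows formally from (i) and (ii): if $j(A)$ and $j(B)$ lie in the same coset, write $j(B) = 2^{t} j(A)$; apply the Frobenius automorphism $f^{t}$ of $\RR$, which fixes $\TT^{*}$ setwise by raising exponents to the $2^{t}$ power and commutes with negation, to get $f^{t}(A) = \xi^{2^{t} i(A)}(1+2\xi^{2^{t}j(A)}) = \xi^{2^{t}i(A)}(1+2\xi^{j(B)})$, so $j(f^{t}(A)) = j(B)$; by (i), $j(-f^{t}(A)) = j(-B)$, and since $f^{t}(-A) = -f^{t}(A)$ we get $j(-B) = j(f^{t}(-A)) = 2^{t}\cdot j(-A)$ (using that $f^{t}$ acts on the $j$-invariant by multiplication by $2^{t}$), so $j(-A)$ and $j(-B)$ are in the same coset.

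The main obstacle I anticipate is part (ii): getting the explicit M\"obius-type formula for $\xi^{j(-A)}$ in terms of $\xi^{j(A)}$ cleanly out of two nested applications of Yamada's formula, and then proving rigorously that this rational function of $\xi^{j(A)}$ can never be a $2$-power-conjugate of $\xi^{j(A)}$. The computation of the carries when adding $1+2\xi^{j}$ to itself-shifted is fiddly (one must be careful that $\xi^{j/2}$ means the unique Teichm\"uller square root, and that $2\cdot 2 = 0$ in the relevant digit), and the non-solvability argument will need the trace lemmas together with $\Delta$ odd. Everything else is bookkeeping with the $2$-adic decomposition $\RR = \TT \oplus 2\TT$ and the elementary fact that cyclotomic cosets partition $\{1,\dots,2^{\Delta}-2\}$.
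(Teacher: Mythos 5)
Your high-level plan (negation via $-1=1+2$, a trace obstruction for (ii), Frobenius for (iii)) points in the right direction, but the central computation on which everything rests is left undone, and the formula you guess for it is wrong. You anticipate ``fiddly carry computations'' from two nested applications of Yamada's formula and propose that $\xi^{j(-A)}$ is some ``M\"obius-type expression'' such as $\xi^{j(A)}/(1+\xi^{j(A)})^{2}$. In fact no Yamada-style carry analysis is needed at all: writing $A=\xi^{i(A)}(1+2\xi^{j(A)})$, one has $-A=3A=A+2A$, and since $2\cdot 2=0$ in $\RR$ the term $2A$ collapses to $2\xi^{i(A)}$; hence
\begin{equation*}
-A \;=\; \xi^{i(A)}\bigl(1+2\xi^{j(A)}+2\bigr)\;=\;\xi^{i(A)}\bigl(1+2(1+\xi^{j(A)})\bigr),
\end{equation*}
and because multiplication by $2$ only sees the residue of its argument modulo $2\RR$, this gives $i(-A)=i(A)$ and the \emph{additive} relation $\overline{\xi}^{\,j(-A)}=1+\overline{\xi}^{\,j(A)}$ in $\FF_{2^{\Delta}}$. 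Your claimed rational-function relation is not this, and the ``non-solvability argument'' you defer to the trace lemmas would not go through from it as stated. So part (ii) --- which you yourself flag as the main obstacle --- is genuinely not established in your write-up.

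Once the correct relation is in hand, (ii) is immediate and needs none of the machinery you invoke: $\Tr(\overline{\xi}^{\,j(-A)})=\Tr(1)+\Tr(\overline{\xi}^{\,j(A)})=1+\Tr(\overline{\xi}^{\,j(A)})$ since $\Delta$ is odd, while $\Tr$ is constant on each $2$-cyclotomic coset; so $j(A)$ and $j(-A)$ cannot lie in the same coset (no contradiction argument or appeal to Lemma~\ref{solution2} is required). Your part (i) is correct in spirit ($j(-A)$ is a function of $j(A)$ alone, as the displayed identity shows), and your Frobenius argument for (iii) is sound and essentially matches the paper's, which simply raises $1+\overline{\xi}^{\,j(A)}$ to the power $2^{s}$. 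I recommend replacing the nested-Yamada derivation with the one-line computation above; it closes the gap and makes all three parts routine.
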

\begin{proof}
Without losing generality, suppose $A=1+2\xi^{j(A)}$ and $B=\xi^{i(B)}(1+2\xi^{j(B)})$.
We have
\begin{IEEEeqnarray}{rCl}
\xi^{i(-B)}(1+2\xi^{j(-B)})-B &=& B+2B \nonumber \\
&=& \xi^{i(B)}\big(1+2(1+\xi^{j(B)})\big) \label{eq:-B} \\
&=& \xi^{i(B)}\big(1+2\xi^{j}\big),\nonumber
\end{IEEEeqnarray}
for some~$j$.
Therefore, $j=j(-B)$ and $i(B)=i(-B)$. Similarly,
$i(A)=i(-A)=0$.

(i) If $j(A)=j(B)$, then
$B = \xi^{i(B)} A$, which implies
$-B = -\xi^{i(B)} A$, $-B = \xi^{i(-B)} (- A)$,
and $j(-B)=j(-A)$.

(ii) From~\eqref{eq:-B} we see that $2(1+\xi^{j(B)})=2\xi^{j(-B)}$.
Hence,
$1+\overline{\xi}^{j(B)}=\overline{\xi}^{j(-B)}$,
and $1+\Tr(\overline{\xi}^{j(B)})=\Tr(\overline{\xi}^{j(-B)})$.
On the other hand,
$\Tr(\overline{\xi}^{j(B)}) = \Tr(\overline{\xi}^{2^s j(B)})$
for all $s$.
Hence, there is no $s$ such that $j(B) \equiv 2^s j(-B) \bmod 2^\Delta-1$,
which implies claim~(ii) for~$j(\pm B)$. For~$j(\pm A)$, the proof is similar.

(iii)
If $j(A)$ and $j(B)$ are in the same coset, then
for some $s$ we have
$\overline{\xi}^{j(B)} = \overline{\xi}^{2^s j(A)}$.
It follows that
\begin{IEEEeqnarray*}{rCl}
\overline{\xi}^{j(-B)} &=& 1+\overline{\xi}^{j(B)} =
1^{2^s}+\big(\overline{\xi}^{j(A)}\big)^{2^s} \\ &=&
\big(1+\overline{\xi}^{j(A)}\big)^{2^s}  =
\big( \overline{\xi}^{j(-A)} \big)^{2^s}
=
\overline{\xi}^{2^sj(-A)},
\end{IEEEeqnarray*}
which implies that~$j(-A)$ and~$j(-B)$ are in the same coset as well.
\end{proof}
Lemma~\ref{j(-A)} establishes a matching on the cosets in~$\CCo$.
For a coset~$\mathcal{A}$,
the coset matched to~$\mathcal{A}$ will be denoted
 by~$-\mathcal{A}$. The notation~$\pm \mathcal{A}$
 will mean the union of~$\mathcal{A}$ and~$-\mathcal{A}$.

\begin{proposition}\label{N_s}
 For every positive integer~$\Delta$
 and every integer~$s>2$, the value $N_s\cdot s$ is divisible by~$3$.
\end{proposition}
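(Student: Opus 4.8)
The plan is to turn the statement into a short computation with the M\"obius function modulo~$3$. Put $n=2^{\Delta}-1$ and work with residues modulo~$n$, on which the $2$-cyclotomic cosets are exactly the orbits of multiplication by~$2$. For $s>2$ every coset of size~$s$ is one of the $N_s$ cosets counted in $\CCo(s)$ (the only coset excluded there, $\{0\}$, has size~$1$), so $N_s\cdot s$ is the number of residues lying in a coset of size exactly~$s$. Since every coset size divides~$\Delta$, if $s\nmid\Delta$ then $N_s=0$ and there is nothing to prove; so assume $s\mid\Delta$ from now on. The aim of the first step is the closed formula
\[
 N_s\cdot s=\sum_{t\mid s}\mu(s/t)\,(2^{t}-1),
\]
after which a direct reduction modulo~$3$ finishes the proof.

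For the first step, let $e_t$ be the number of residues modulo~$n$ whose coset has size exactly~$t$. Then $\sum_{t\mid s}e_t$ counts the residues~$x$ with $|C_x|\mid s$, equivalently with $(2^{s}-1)x\equiv 0\pmod n$; these form the subgroup of $\ZZ/n\ZZ$ of order $\gcd(n,2^{s}-1)=2^{\gcd(\Delta,s)}-1$, which equals $2^{s}-1$ because $s\mid\Delta$. Hence $\sum_{t\mid s}e_t=2^{s}-1$ for all $s\mid\Delta$, and M\"obius inversion over the divisors of~$s$ gives $e_s=\sum_{t\mid s}\mu(s/t)(2^{t}-1)$. For $s>1$ we have $e_s=N_s\cdot s$; the discrepancy $e_1=1\ne N_1=0$ at $t=1$ does no harm, since the right-hand side already produces the value~$1$ when $t=1$.

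For the second step, $\sum_{t\mid s}\mu(s/t)=0$ for $s>1$, so the formula simplifies to $N_s\cdot s=\sum_{t\mid s}\mu(s/t)2^{t}$. Reducing modulo~$3$ via $2\equiv-1$, and eliminating the even divisors with the help of $\sum_{t\mid s}\mu(s/t)=0$ once more,
\[
 N_s\cdot s=\sum_{t\mid s}\mu(s/t)2^{t}\equiv\sum_{t\mid s}\mu(s/t)(-1)^{t}=-2\sum_{\substack{t\mid s\\ t\text{ odd}}}\mu(s/t)\equiv\sum_{\substack{t\mid s\\ t\text{ odd}}}\mu(s/t)\pmod 3.
\]
Writing $s=2^{a}m$ with $m$ odd, the odd divisors of~$s$ are exactly the divisors~$e$ of~$m$ and $\mu(s/e)=\mu(2^{a})\mu(m/e)$, so this last sum is~$0$ when $a\ge 2$ (because $\mu(2^{a})=0$), equals $-\sum_{e\mid m}\mu(m/e)$ when $a=1$, and equals $\sum_{e\mid m}\mu(m/e)$ when $a=0$. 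Since $s>2$, we have $m>1$ whenever $a\le 1$ and $s=m>1$ when $a=0$, hence $\sum_{e\mid m}\mu(m/e)=0$ in each surviving case, and therefore $N_s\cdot s\equiv 0\pmod 3$.

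The step that needs care is the first one: identifying the order of the annihilator subgroup as $\gcd(n,2^{s}-1)=2^{s}-1$, which is precisely where the hypothesis $s\mid\Delta$ enters, and verifying that the off-by-one at $t=1$ is harmless in the M\"obius inversion; the remainder is a routine case split on the $2$-adic valuation of~$s$. An alternative that avoids M\"obius inversion is a strong induction on~$s$: the case $3\mid s$ is immediate, and for $3\nmid s$, $s>2$, $s\mid\Delta$ one writes $N_s\cdot s=2^{s}-1-\sum_{t\mid s,\,t<s}e_t$, observes that the only proper divisors contributing a nonzero residue modulo~$3$ are $t=1$ (with $e_1=1$) and, when $s$ is even, $t=2$ (with $e_2=2$), applies the induction hypothesis to the remaining divisors, and checks that the resulting constant is $\equiv 0\pmod 3$ in either parity.
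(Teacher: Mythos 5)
Your proof is correct, but it takes a genuinely different route from the paper's. You establish the exact closed form $N_s\cdot s=\sum_{t\mid s}\mu(s/t)\,(2^t-1)$ (the classical necklace/irreducible-polynomial count) by identifying the residues whose coset size divides $s$ with the subgroup of order $\gcd(2^\Delta-1,2^s-1)=2^s-1$ when $s\mid\Delta$, applying M\"obius inversion, and then reducing modulo $3$ via $2\equiv-1$ and a case split on the $2$-adic valuation of $s$; your check that the $t=1$ discrepancy ($e_1=1$ versus $N_1=0$) does not corrupt the inversion is exactly the point that needed attention, and it is handled correctly. The paper instead argues by induction on $\Delta$: it proves only the containment statement that every size-$s$ coset lies in the subgroup $\frac{2^\Delta-1}{2^s-1}\ZZ_{2^\Delta-1}^+$ of order $2^s-1$, which lets the induction hypothesis (with $\Delta$ replaced by a proper divisor) dispose of all $2<s<\Delta$, and then extracts the top term from $N_\Delta\cdot\Delta=2^\Delta-1-\sum_{s<\Delta}N_s\cdot s$ using the congruence $2^\Delta-1-N_1\cdot1-N_2\cdot2\equiv0\pmod 3$ for either parity of $\Delta$. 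The two arguments are close relatives --- your M\"obius inversion is essentially the closed-form unwinding of the paper's induction --- but yours buys an explicit formula for $N_s\cdot s$ and makes transparent exactly where the hypothesis $s>2$ enters (it kills the surviving M\"obius sum $\sum_{e\mid m}\mu(m/e)$), whereas the paper's version is more elementary, avoiding the M\"obius function altogether at the cost of leaving $N_s$ implicit.
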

\begin{proof}
Assume $a$ is an element of a
$2$-cyclotomic coset
of size $s$ in the cyclic group~%
$\ZZ_{2^\Delta-1}^+$.
By the definition, we have
\begin{equation}\label{eq:sD}
 2^s a \equiv a \bmod 2^\Delta-1,
\end{equation}
where $s$ is the minimum
positive integer for which
\eqref{eq:sD} holds.
Since, obviously,
$2^\Delta a \equiv a \bmod 2^\Delta-1$,
we see that $s$ divides~$\Delta$.
In particular,
$2^s-1$ divides
$2^\Delta-1$.
Next, \eqref{eq:sD}
means that
$
(2^s-1) a
=
A (2^\Delta-1)
$ for some $A$.
Hence,
$ a
=
A \cdot \frac{2^\Delta-1}{2^s-1}$,
and $a$ is a multiple of
$\frac{2^\Delta-1}{2^s-1}$.
We conclude the following:
\begin{itemize}
 \item [(*)] \it
 Every coset of size $s$
 lies in the subgroup
 $ \frac{2^\Delta-1}{2^s-1} \ZZ_{2^\Delta-1}^+$ of order ${2^s-1}$
 of the group~$ \ZZ_{2^\Delta-1}^+$.
\end{itemize}
With (*), the claim of the proposition can be easily proved by induction on $\Delta$. The induction base is
$\Delta = 1$ and $\Delta = 2$,
when there are no $s$ satisfying
$s>2$ and $N_s>0$.
Assume that $\Delta > 2$.
From (*) and the induction
hypothesis, we have the following:
\begin{itemize}
 \item [(**)] \it
 If $2<s<\Delta$,
 then
 $N_s\cdot s$ is divisible by $3$.
\end{itemize}
It remains to show
that $N_\Delta \cdot \Delta$
is divisible by $3$.
We derive this from the obvious
$2^\Delta-1
=|\ZZ_{2^\Delta-1}^+|
 =\sum_{s\le\Delta}
 N_s\cdot s$:
 \begin{IEEEeqnarray*}{rCl}
   N_\Delta \cdot \Delta &=& 2^\Delta-1 - \sum_{s<\Delta}
 N_s\cdot s \\
    &=& \big(2^\Delta-1 - N_1\cdot 1
 - N_2\cdot 2\big)
 - \sum_{s:\,2<s<\Delta}
 N_s\cdot s.
 \end{IEEEeqnarray*}
Since $N_1=1$,
$N_2 = 1$ for even~$\Delta$,
and
$N_2 = 0$ for odd~$\Delta$,
the expression in the parenthesis is divisible by~$3$.
The remaining part is divisible
by~$3$ according to (**).
Hence, $N_\Delta \cdot \Delta$
is divisible by $3$, and the proof by induction is complete.
\end{proof}

\begin{theorem}\label{nd3}
Assume that $\Delta$ and $s$
are odd positive
integers such that
$s$ divides $\Delta$, $s>2$, and
$s$ is not divisible
by~$3$.
In
$\RR\backslash
(\TT \cup 2\TT \cup 3\TT)$,
there are
$A_1$, $B_1$, \ldots, $A_k$, $B_k$,
where $k=N_s/6$,
such that $j(A_i)$, $j(B_i)$, $j(-A_i-B_i)$,
$j(-A_i)$, $j(-B_i)$, $j(A_i+B_i)$, $i=1,\ldots,k$, represent all cosets of size $s$ in $\CCo$; in particular,
$|C_{j(A_i)}|=|C_{j(B_i)}|=|C_{j(-A_i-B_i)}|=
|C_{j(-A_i)}|=|C_{j(-B_i)}|=|C_{j(A_i+B_i)}| = s$ for each $i=1,\ldots,k$.
\end{theorem}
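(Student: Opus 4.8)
The plan is to encode each valid element $A\in\RR\setminus(\TT\cup2\TT\cup3\TT)$ by the single field element $z_A:=\overline{\xi}^{\,j(A)}\in\FF_{2^\Delta}^*\setminus\{1\}$. With this encoding $C_{j(A)}$ is exactly the Frobenius orbit $\{z_A,z_A^{2},z_A^{4},\dots\}$, so $|C_{j(A)}|$ is the degree of $z_A$ over $\FF_2$ and $\Tr$ is constant on each coset. Two identities carry the argument. First, equation~\eqref{eq:-B} reads $2\,\overline{\xi}^{\,j(-A)}=2\bigl(1+\overline{\xi}^{\,j(A)}\bigr)$, i.e.\ $z_{-A}=1+z_A$. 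Second, after multiplying by a unit of $\TT$ (which changes no $j$-value) we may take $A=1+2\xi^{j(A)}$ and $B=\xi^{i(B)}\bigl(1+2\xi^{j(B)}\bigr)$; expanding $A+B$ and applying Yamada's formula (Lemma~\ref{Yamada}) to the sum $1+\xi^{i(B)}$, then passing to images modulo $2$, gives, whenever $\tau:=\overline{\xi}^{\,i(B)}\neq1$,
\[
z_{A+B}=\frac{\tau^{1/2}+z_A+\tau\,z_B}{1+\tau}.
\]
(If $\tau=1$ then $A+B\in2\TT$, which is forbidden, so we shall always keep $\tau\neq1$.)

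By Lemma~\ref{j(-A)}, the map $\mathcal{A}\mapsto-\mathcal{A}$ is a fixed-point-free, size-preserving involution on the $N_s$ cosets of size $s$, so $N_s$ is even; together with Proposition~\ref{N_s} and $\gcd(s,3)=1$ this forces $6\mid N_s$, so $k=N_s/6\in\ZZ$. Partition the size-$s$ cosets into the $N_s/2$ matched pairs $\{\mathcal{A},-\mathcal{A}\}$ and group these pairs arbitrarily into $k$ triples. It then suffices to show that every triple of pairwise distinct matched pairs $\{P,-P\},\{R,-R\},\{S,-S\}$ is \emph{realizable}: there exist valid $A,B$ with $A+B$ valid and $\{C_{j(A)},C_{j(-A)}\}=\{P,-P\}$, $\{C_{j(B)},C_{j(-B)}\}=\{R,-R\}$, $\{C_{j(A+B)},C_{j(-A-B)}\}=\{S,-S\}$. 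Indeed, the six $j$-values attached to one such pair $(A,B)$ account for exactly these six cosets, and as the triples range over the chosen grouping every size-$s$ coset is accounted for exactly once.

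To realize a triple, we use that $\Delta$ is odd, hence $\Tr(1)=1$: since $z_{-A}=1+z_A$, the two cosets of a matched pair carry opposite traces, so exactly one of them is a trace-$0$ coset. Pick $\alpha$ in the trace-$0$ coset of $\{P,-P\}$, $\beta$ in the trace-$0$ coset of $\{R,-R\}$, and any $\gamma\in S\cup(-S)$. Set $A=1+2\xi^{j_0}$ with $\overline{\xi}^{\,j_0}=\alpha$ and $B=\xi^{i_0}\bigl(1+2\xi^{j_1}\bigr)$ with $\overline{\xi}^{\,j_1}=\beta$, $\overline{\xi}^{\,i_0}=\tau$, and seek $\tau\neq0,1$ with $z_{A+B}=\gamma$. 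Writing $\tau=u^{2}$ and then $u=v/(\beta+\gamma)$, the equation $z_{A+B}=\gamma$ becomes the Artin--Schreier equation $v^{2}+v=(\alpha+\gamma)(\beta+\gamma)$, which by Lemma~\ref{solution2} is solvable in $\FF_{2^\Delta}$ exactly when $\Tr\bigl((\alpha+\gamma)(\beta+\gamma)\bigr)=0$. Now $\Tr(\alpha+\gamma)=\Tr(\alpha)+\Tr(\gamma)=\Tr(\gamma)=\Tr(\beta+\gamma)$, so Lemma~\ref{solution1} applied to $y_1=\alpha+\gamma$, $y_2=\beta+\gamma$ yields a dichotomy: either $\Tr\bigl((\alpha+\gamma)(\beta+\gamma)\bigr)=0$, and we realize $z_{A+B}=\gamma$; or $\Tr\bigl((\alpha+1+\gamma)(\beta+1+\gamma)\bigr)=0$, and the same construction with $\gamma$ replaced by $\gamma':=1+\gamma$ realizes $z_{A+B}=\gamma'$. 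Since the Frobenius orbit of $\gamma$ (resp.\ of $\gamma'$) is $S$ or $-S$, either alternative realizes the triple. It remains to dispatch a handful of non-degeneracy conditions --- $\tau\neq0$, $\tau\neq1$, and $A+B\notin\TT\cup2\TT\cup3\TT$ (and likewise that $A,B$ themselves are valid) --- which reduce to the non-equalities $\gamma\notin\{\alpha,\beta,1+\alpha,1+\beta\}$, $\alpha\neq1+\beta$, and $\gamma\neq0,1$ (together with their $\gamma'$-analogues); each of these holds because no two of $\alpha,1+\alpha,\beta,1+\beta,\gamma,1+\gamma$ can share a Frobenius orbit (the three matched pairs being distinct) and because $|S|=s>2$ prevents $S\cup(-S)$ from meeting $\{0,1\}$.

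The crux is the first paragraph: extracting the closed form for $z_{A+B}$ from Yamada's formula and recognizing that prescribing $z_{A+B}=\gamma$ is nothing but the Artin--Schreier equation $v^{2}+v=(\alpha+\gamma)(\beta+\gamma)$. Once that is in hand the rest clicks into place almost by design --- the trace of $\gamma$ cancels, Lemma~\ref{solution1} turns the two reachable targets $\gamma$ and $1+\gamma$ into the two halves of the single matched pair $\{S,-S\}$, and the oddness of $\Delta$ guarantees a trace-$0$ representative in each matched pair --- leaving the combinatorial step (how to group matched pairs into triples) completely free.
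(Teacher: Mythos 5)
Your proof is correct and follows essentially the same route as the paper's: Yamada's formula (Lemma~\ref{Yamada}) converts the target condition on $j(A+B)$ into the Artin--Schreier equation $T^2+T+(\overline{\alpha}+\overline{\gamma})(\overline{\beta}+\overline{\gamma})=0$, Lemma~\ref{solution2} supplies the trace criterion for solvability, and the failure case is absorbed by flipping to the matched coset $-\mathcal{C}$. The one genuine (local) refinement is your normalization of $\alpha$ and $\beta$ to the trace-zero halves of their matched pairs, which forces $\Tr(\overline{\alpha}+\overline{\gamma})=\Tr(\overline{\beta}+\overline{\gamma})$ and lets Lemma~\ref{solution1} collapse the paper's three subcases \eqref{eq:01}--\eqref{eq:11} into the single dichotomy between $\gamma$ and $1+\gamma$; your explicit verification of the non-degeneracy conditions ($\tau\neq 0,1$ and the validity of $A$, $B$, $A+B$), which the paper passes over silently, is also sound.
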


\begin{proof}
 Since $s$ is not divisible by $3$, the size $N_s$ of $\CCo(s)$ is divisible by $3$ by Proposition~\ref{N_s}.
According to case (ii) of Lemma~\ref{j(-A)},
if $\mathcal{A}\in \CCo(s)$ then $-\mathcal{A}\in \CCo(s)$, i.e, $N_s$ is divisible by $2$.
Hence, $N_s$ is divisible by $6$.

By Lemma~\ref{j(-A)},
it is sufficient to prove the following:
\begin{itemize}
 \item[(*)] \emph{For any given three cosets
$\mathcal{A}$, $\mathcal{B}$,
$\mathcal{C}\in \CCo(s)$ such that
$   \pm \mathcal{A}
\ne \pm \mathcal{B}
\ne \pm \mathcal{C}
\ne \pm \mathcal{A}
$,
there exist
$A$ and $B$ in
$\RR\backslash
(\TT \cup 2\TT \cup 3\TT)$ such that
$$j(A)\in \mathcal{A},
\quad
j(B) \in \pm \mathcal{B},
\quad
j(A+B)\in \pm \mathcal{C}.
$$
}
\end{itemize}

We now assume $i(A)=1$ without loss of generality,
fix arbitrary values of
$j(A)$ in~$\mathcal A$,
$j(B)$ in~$\mathcal B$, and
$j'$ in~$\mathcal C$, and try to find
the value of $i(B)$ such that $j(A+B)=j'$.
Denote $\xi^{j(A)}$, $\xi^{i(B)}$, $\xi^{j(B)}$, $\xi^{j'}$ by $\alpha$, ${x}$, $\beta$, $\gamma$ respectively; in particular,
$$A=1+2\alpha \quad \mbox{and} \quad B={x}(1+2\beta).$$
From Yamada's formula (Lemma~\ref{Yamada}),
we have
\begin{equation}\label{eq:A+B}
A+B=(1+{x}+2\sqrt{{x}})(1+2\gamma),
\end{equation}
where $ (1+{x}+2\sqrt{{x}}) \in \TT. $
Recall that we want to find~${x}$.
After expanding, \eqref{eq:A+B} turns to
$$2(1+{x})\gamma=2(\alpha+{x}\beta+\sqrt{{x}}),$$
which is equivalent to
$$(1+\overline{x})\overline{\gamma}=\overline{\alpha}+\overline{x}\overline{\beta}+\sqrt{\overline{x}}.$$
Multiplying both sides by $\overline{\beta}+\overline{\gamma}$ and denoting $T=(\overline{\beta}+\overline{\gamma})\sqrt{\overline{x}}$ we rewrite it as
\begin{equation}\label{eq:T2T}
T^2+T+(\overline{\alpha}+\overline{\gamma})(\overline{\beta}+\overline{\gamma})=0
\end{equation}
(note that $ \overline{\beta}+\overline{\gamma} \ne 0$ because $j(B)$ and~$j'$ belong to different cosets, by the hypothesis).
We now consider two cases.

\emph{Case 1.} Assume $\Tr\big((\overline{\alpha}+\overline{\gamma})(\overline{\beta}+\overline{\gamma})\big)=0$.
By Lemma~\ref{solution2},
there is a nonzero solution~$T$ of equation~\eqref{eq:T2T}.
From~$T$, we find $\overline{x} = T^2/(\overline{\beta}+\overline{\gamma})^2$,
$x$, and $i(B)=\log_{\overline{\xi}}\overline{x}=\log_{\xi}{x}$.
From~\eqref{eq:A+B} we see that $j(A+B)=j'$, as required, and (*) holds with
$$j(A)\in \mathcal{A},
\quad
j(B) \in  \mathcal{B},
\quad
j(A+B)\in \mathcal{C}.
$$

\emph{Case 2.} Assume  $\Tr\big((\overline{\alpha}+\overline{\gamma})(\overline{\beta}+\overline{\gamma})\big)=1$.
Depending on the values of $\Tr(\overline{\alpha}+\overline{\gamma}) $ and
$\Tr(\overline{\beta}+\overline{\gamma})$,
at least one of the following is true:
\begin{IEEEeqnarray}{rCl}
 \Tr\big((\overline{\alpha}+\overline{\gamma})(\overline{\beta}+\overline{\gamma}+1)\big)&=&0, \label{eq:01}\\
\Tr\big((\overline{\alpha}+\overline{\gamma}+1)(\overline{\beta}+\overline{\gamma})\big)&=&0, \label{eq:10}\\
\Tr\big((\overline{\alpha}+\overline{\gamma}+1)(\overline{\beta}+\overline{\gamma}+1)\big)&=&0 \label{eq:11}.
\end{IEEEeqnarray}
In  subcase \eqref{eq:11},
replacing $\overline{\gamma}$ by $\overline{\gamma}+1$
(which changes~$j'$ to~$\log_{\overline{\xi}}(\overline{\xi}^{j'}+1)$
and moves it to $-\mathcal{C}$)
makes equation~\eqref{eq:T2T} solvable and proves~(*) with
$$j(A)\in \mathcal{A},
\quad
j(B) \in  \mathcal{B},
\quad
j(A+B)\in -\mathcal{C}.
$$
In subcase~\eqref{eq:01},
we replace $\overline{\beta}$ by $\overline{\beta}+1$
(which changes the choice of $j(B)$ and
  moves it to $-\mathcal{B}$)
makes the equation solvable and proves~(*) with
$$j(A)\in \mathcal{A},
\quad
j(B) \in  -\mathcal{B},
\quad
j(A+B)\in \mathcal{C}.
$$
Subcase~\eqref{eq:10} is solved similarly with replacing
both $\overline{\beta}$ by $\overline{\beta}+1$ and $\overline{\gamma}$ by $\overline{\gamma}+1$.
\end{proof}

\begin{theorem}\label{d3}
If $s$ is divisible by~$3$, then for any coset~$\mathcal{A}$ in~$\CCo$ of size~$s$, there exist $A,B\in \RR$ such that $j(-A-B)=2^\delta j(B)=2^{2\delta}j(A)$ or $j(A+B)=2^\delta j(B)=2^{2\delta}j(A)$, where $\delta=s/3$ and $j(A)=j\in \mathcal{A}$.
\end{theorem}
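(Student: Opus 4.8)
The plan is to follow the template of the proof of Theorem~\ref{nd3}. I would fix any $j\in\mathcal A$, set $\alpha=\xi^{j}$ and $A=1+2\alpha$, and search for $B$ of the form $x(1+2\beta)$ with $x=\xi^{i(B)}\in\TT^{*}$ still free. The two relations to be proved force $j(B)=2^{\delta}j$ and $j(\pm(A+B))=2^{2\delta}j$, so I would fix $\beta:=\alpha^{2^{\delta}}=\xi^{2^{\delta}j}$ from the start; then $j(B)=2^{\delta}j$ automatically, and it remains only to choose $x$ so that $j(A+B)=2^{2\delta}j$ or $j(-A-B)=2^{2\delta}j$. The one structural fact I would use repeatedly is that $\mathcal A$ has size exactly $s=3\delta$, i.e.\ $2^{r}j\equiv j\pmod{2^{\Delta}-1}$ holds iff $s\mid r$; in particular $s\nmid\delta$ and $s\nmid 2\delta$ (since $0<\delta<2\delta<s$).

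The next step is to reuse the computation from the proof of Theorem~\ref{nd3}. By Yamada's formula (Lemma~\ref{Yamada}) one has $A+B=(1+x+2\sqrt x)(1+2\gamma)$ as in~\eqref{eq:A+B}, with $\gamma=\xi^{j(A+B)}$, and $\overline\gamma$ is governed by~\eqref{eq:T2T}: $T^{2}+T+(\overline\alpha+\overline\gamma)(\overline\beta+\overline\gamma)=0$, where $T=(\overline\beta+\overline\gamma)\sqrt{\overline x}$. Setting $g:=\overline\alpha^{\,2^{2\delta}}=\overline\xi^{\,2^{2\delta}j}$, the target $j(A+B)=2^{2\delta}j$ means $\overline\gamma=g$, while $j(-A-B)=2^{2\delta}j$ means $\overline\gamma=1+g$ (using $\overline\xi^{\,j(-u)}=1+\overline\xi^{\,j(u)}$, cf.~\eqref{eq:-B} and the proof of Lemma~\ref{j(-A)}). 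For each of these two choices, the facts $s\nmid\delta$, $s\nmid 2\delta$, $s\mid 3\delta$ let one check that $\overline\beta+\overline\gamma\ne0$ and that neither $T=0$ nor $T=\overline\beta+\overline\gamma$ can be a solution; hence by Lemma~\ref{solution2} equation~\eqref{eq:T2T} has a solution $T$ yielding $\overline x=T^{2}/(\overline\beta+\overline\gamma)^{2}\notin\{0,1\}$ exactly when $\Tr\bigl((\overline\alpha+\overline\gamma)(\overline\beta+\overline\gamma)\bigr)=0$, i.e.\ when $\Tr\bigl((g+\overline\alpha)(g+\overline\beta)\bigr)=0$ in the first case and $\Tr\bigl((1+g+\overline\alpha)(1+g+\overline\beta)\bigr)=0$ in the second.

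The conceptual core is then a one-line appeal to Lemma~\ref{solution1} with $y_{1}=g+\overline\alpha$ and $y_{2}=g+\overline\beta=g+\overline\alpha^{\,2^{\delta}}$: since the absolute trace is invariant under the Frobenius map, $\Tr(y_{1})=\Tr(g)+\Tr(\overline\alpha)=\Tr(g)+\Tr(\overline\alpha^{\,2^{\delta}})=\Tr(y_{2})$, so Lemma~\ref{solution1} applies and at least one of $\Tr(y_{1}y_{2})=0$, $\Tr\bigl((y_{1}+1)(y_{2}+1)\bigr)=0$ holds. In the first case I take $\overline\gamma=g$, recover $x$ from $\overline x=T^{2}/(\overline\beta+\overline\gamma)^{2}$, and get $j(A+B)=2^{2\delta}j$; in the second case I take $\overline\gamma=1+g$ and get $j(-A-B)=2^{2\delta}j$. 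As $j(B)=2^{\delta}j$ by construction and $2^{3\delta}j\equiv2^{s}j\equiv j\pmod{2^{\Delta}-1}$, in either case the required chain $j(\pm(A+B))=2^{\delta}j(B)=2^{2\delta}j(A)$ holds with $j(A)=j\in\mathcal A$, which is the claim.

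I expect the main obstacle to be not this conceptual core but the non-degeneracy bookkeeping: I must verify that $A$, $B$ and $A+B$ (hence $-A-B$) all lie in $\RR\setminus(\TT\cup2\TT\cup3\TT)$, so that each $j(\cdot)$ above and the identity $\overline\xi^{\,j(-u)}=1+\overline\xi^{\,j(u)}$ make sense. Since $j\ne0$ and $2^{\delta}j\not\equiv0$, both $1+2\xi^{j}$ and $1+2\xi^{2^{\delta}j}$ avoid $\TT\cup2\TT\cup3\TT$, and multiplication by the unit $x\in\TT^{*}$ preserves this; for $A+B=(1+x+2\sqrt x)(1+2\gamma)$ one needs $\overline\gamma\notin\{0,1\}$ and $\overline x\notin\{0,1\}$, and each forbidden possibility ($g\in\{0,1\}$, or $T\in\{0,\overline\beta+\overline\gamma\}$) forces a congruence of the form $2^{r}j\equiv j$ or $2^{r}j\equiv0\pmod{2^{\Delta}-1}$ with $0<r<s$, which contradicts $|\mathcal A|=s$. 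These are all elementary exponent manipulations modulo $2^{\Delta}-1$; the real content of the proof is the choice $\beta=\alpha^{2^{\delta}}$ and $\gamma\in\{\alpha^{2^{2\delta}},\,1+\alpha^{2^{2\delta}}\}$, which forces $\Tr(y_{1})=\Tr(y_{2})$ and thereby makes Lemma~\ref{solution1} applicable.
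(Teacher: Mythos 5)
Your proposal is correct and follows essentially the same route as the paper: fix $A=1+2\xi^{j}$, force $B$ to have $j(B)=2^{\delta}j$ by taking $\beta=\alpha^{2^{\delta}}$, reduce via Yamada's formula to the quadratic trace condition \eqref{eq:T2T} with $\overline\gamma\in\{\overline\alpha^{2^{2\delta}},\,1+\overline\alpha^{2^{2\delta}}\}$, and use the trace-parity argument of Lemma~\ref{solution1} (with $\Tr(y_1)=\Tr(y_2)=0$ by Frobenius-invariance) to conclude that one of the two alternatives is solvable. Your extra non-degeneracy bookkeeping ($\overline\beta+\overline\gamma\ne0$, $\overline x\notin\{0,1\}$) is sound and in fact slightly more careful than the paper's own write-up.
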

\begin{proof}
When $s$ is divisible by~$3$, for a given coset~$\mathcal{A}$ in~$\CCo$ of size $s$, we need to prove that there exist $j\in \mathcal{A}$ and $x_2 \in \TT$ such that $A=1+2y$, $B=x_2(1+2y^{2^\delta})$ and $A+B=u_0(1+2y^{2^{2\delta}})$ or $-A-B=u_0(1+2y^{2^{2\delta}})$, where $y=\xi^j$ and $u_0=1+x_2+2\sqrt{x_2}$ from Yamada's formula (Lemma~\ref{Yamada}).
 Similar to the previous discussion in Theorem~\ref{nd3}, we need to prove that there exist $j\in \mathcal{A}$ and $\overline{y}=\overline{\xi}^j\in F_{2^\Delta}$ such that
\begin{IEEEeqnarray*}{rCl}
  \Tr\big((\overline{y}^{2^\delta}\!+\overline{y}^{2^{2\delta}})(\overline{y}+\overline{y}^{2^{2\delta}})\big)&=&0 \quad
\mbox{or} \\ \Tr\big((\overline{y}^{2^\delta}\!+1+\overline{y}^{2^{2\delta}})(\overline{y}+1+\overline{y}^{2^{2\delta}})\big)&=&0.
\end{IEEEeqnarray*}
By Lemma~\ref{solution1},
\begin{multline*}
 \Tr\big((\overline{y}^{2^\delta}\!+1+\overline{y}^{2^{2\delta}})(\overline{y}+1+\overline{y}^{2^{2\delta}})\big)
 \\
 =\Tr\big(
 (\overline{y}^{2^\delta}\!\!+\overline{y}^{2^{2\delta}})
 (\overline{y}+\overline{y}^{2^{2\delta}})
 +(\overline{y}^{2^\delta}\!\! +\overline{y}^{2^{2\delta}})
 +(\overline{y}+\overline{y}^{2^{2\delta}})
 +1\big)
 \\
 =\Tr\big((\overline{y}^{2^\delta}\!+\overline{y}^{2^{2\delta}})(\overline{y}+\overline{y}^{2^{2\delta}})\big)+1.
\end{multline*}
Hence, one of the equations above can hold.
\end{proof}

The main result of the current section is the following theorem.
\begin{theorem}\label{th:partition}
For every odd $\Delta$,
there is a partition
of
$\RR\backslash\{0\}$
into
$2^\Delta - 1$
triples of form
$\{ a, 2a, 3a\}$
and $\frac{(2^\Delta-1)(2^\Delta-2)}{6}$  sixtuples of form
$\{ \pm a, \pm b, \pm(a+b) \}$
such that the partition is invariant
under the multiplication by $\xi$
and, if $\Delta\not\equiv 0 \bmod 3$, under the action of
the automorphisms of
$\GR(4^\Delta)$.
\end{theorem}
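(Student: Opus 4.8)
The plan is to split $\RR\setminus\{0\}$ into the part covered by triples and the part $U=\RR\setminus(\TT\cup 2\TT\cup 3\TT)$ covered by sixtuples, handling $U$ with Theorems~\ref{nd3} and~\ref{d3}. Every element of $U$ is $\xi^{i}(1+2\xi^{j})$ with $j\ne0$, and since $3\xi^{i}=\xi^{i}(1+2\xi^{0})$, the complement of $U$ in $\RR\setminus\{0\}$ is exactly $\TT^{*}\cup 2\TT^{*}\cup 3\TT^{*}$, to which I assign the $2^{\Delta}-1$ triples $\{\xi^{i},2\xi^{i},3\xi^{i}\}$, $0\le i\le 2^{\Delta}-2$, which are evidently permuted among themselves by multiplication by $\xi$ and by the Frobenius $f$. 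So it remains to partition $U$. Tracking $A\in U$ by the pair $(i(A),j(A))$, I record: multiplication by $\xi$ acts as $(i,j)\mapsto(i+1,j)$; $f$ acts as $(i,j)\mapsto(2i,2j)$; and, by Lemma~\ref{j(-A)}, negation acts as $(i,j)\mapsto(i,j')$, where $j\mapsto j'$ is the fixed-point-free involution of $\ZZ_{2^{\Delta}-1}\setminus\{0\}$ with $\overline{\xi}^{\,j'}=1+\overline{\xi}^{\,j}$, an involution commuting with doubling, hence inducing the matching $\mathcal A\mapsto-\mathcal A$ on $\CCo$. Consequently a sixtuple $\{a,b,-a-b,-a,-b,a+b\}$ with $p=j(a)$, $q=j(b)$, $r=j(a+b)$ has $j$-value multiset $\{p,p',q,q',r,r'\}$.

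I then reduce to producing one ``seed'' sixtuple per coset-class. Let $G$ be the group of permutations of $U$ generated by multiplication by $\xi$ and by $f$; it has order $(2^{\Delta}-1)\Delta$ (with $f$ conjugating multiplication by $\xi$ to multiplication by $\xi^{2}$), it permutes the type-(I) sixtuples, and its orbit through an element with $j$-value in a coset $\mathcal A$ is precisely $\{A\in U:j(A)\in\mathcal A\}$, of size $|\mathcal A|\,(2^{\Delta}-1)$. So it suffices to exhibit, for each coset-class, a seed whose orbit under an appropriate subgroup of $G$ is a sub-partition of the corresponding invariant block of $U$, the blocks exhausting $U$. I use the elementary principle: if a family of sixtuples is $H$-invariant for some group $H$ acting on $U$, its union is an $H$-invariant set $V$, and six times the number of sixtuples equals $|V|$, then the family partitions $V$ (the $H$-orbit of any element of any seed already fills out the part of $V$ in its coset, leaving no room for multiplicities).

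The seeds are built size by size, for each odd divisor $s>1$ of $\Delta$. If $3\nmid s$ (automatic when $3\nmid\Delta$), then $6\mid N_{s}$ by Proposition~\ref{N_s} together with Lemma~\ref{j(-A)}(ii), and the crucial device is to apply Theorem~\ref{nd3} \emph{inside the subring $\GR(4^{s})\subseteq\GR(4^{\Delta})$}, obtaining $N_{s}/6$ seeds $S_{1},\dots,S_{N_{s}/6}\subseteq\GR(4^{s})$ whose $j$-values represent each size-$s$ coset exactly once; this is legitimate because, by the observation~$(*)$ in the proof of Proposition~\ref{N_s}, the size-$s$ cosets of $\ZZ_{2^{\Delta}-1}$ are exactly the maximal cosets of the order-$(2^{s}-1)$ subgroup, i.e.\ the size-$s$ cosets of $\ZZ_{2^{s}-1}$. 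Since each $S_{i}\subseteq\GR(4^{s})$ is fixed by $f^{s}$, the $G$-stabiliser of $S_{i}$ equals $\langle f^{s}\rangle$ of order $\Delta/s$ (no other element of $G$ can fix $S_{i}$, as its $j$-multiset has one element in each of six distinct size-$s$ cosets), so the $G$-orbit of $S_{i}$ consists of $s(2^{\Delta}-1)$ sixtuples, the union of its members is the six attached size-$s$ cosets taken over all values of $i$, and the principle above makes it a sub-partition. If instead $3\mid s$ (which forces $3\mid\Delta$), I pick one coset $\mathcal A$ from each of the $N_{s}/2$ matched pairs; Theorem~\ref{d3} supplies a seed $T_{\mathcal A}$ whose $j$-multiset is three elements of $\mathcal A$ together with three of $-\mathcal A$, and the Frobenius translates $T_{\mathcal A},f(T_{\mathcal A}),\dots,f^{\delta-1}(T_{\mathcal A})$ with $\delta=s/3$ have pairwise disjoint $j$-multisets whose union is $\mathcal A\cup(-\mathcal A)$, so their $\langle\xi\rangle$-orbits partition $\{A\in U:j(A)\in\mathcal A\cup(-\mathcal A)\}$.

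Finally, collecting the $2^{\Delta}-1$ triples together with all the sixtuple-orbits above, over all admissible $s$ and all seeds, yields the partition: disjointness is clear (cosets of distinct sizes are disjoint, and for fixed $s$ the seeds meet distinct cosets by Theorem~\ref{nd3}, respectively distinct matched pairs by choice), and the union is all of $\RR\setminus\{0\}$ since every nonzero element has $j$-value $0$ and sits in a triple or else lies in one of the coset blocks. The family is invariant under multiplication by $\xi$ by construction; when $3\nmid\Delta$ every $s$ satisfies $3\nmid s$, every block is a full $G$-orbit, the whole family is $G$-invariant, hence invariant under $\langle f\rangle$, which by Section~\ref{s2.1} is the full automorphism group of $\GR(4^{\Delta})$. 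The cardinalities $2^{\Delta}-1$ and $(2^{\Delta}-1)(2^{\Delta}-2)/6$ follow by counting, using $\sum_{\mathcal A\in\CCo}|\mathcal A|=2^{\Delta}-2$. The main obstacle, as I see it, is precisely this orbit-size control: a naive application of Theorem~\ref{nd3} in the full ring produces a seed whose $G$-orbit is too large and overlaps itself, so forcing $f^{s}$ into the stabiliser via the subring $\GR(4^{s})$ is essential; a secondary point requiring care is the disjointness bookkeeping for the Frobenius translates when $3\mid s$, and keeping the two invariance regimes ($\xi$ always, automorphisms only when $3\nmid\Delta$) cleanly separated.
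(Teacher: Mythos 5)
Your proposal is correct and follows the same skeleton as the paper's proof: the triples $\{\xi^i,2\xi^i,3\xi^i\}$ exhaust $(\TT\cup2\TT\cup3\TT)\setminus\{0\}$, and the sixtuples are obtained from seed pairs supplied by Theorems~\ref{nd3} and~\ref{d3}, propagated by Frobenius translates within each coset-size class and then by multiplication by $\TT^*$ (your $G$-orbits). The one place where you genuinely diverge is the treatment of a coset size $s$ with $3\nmid s$: the paper takes the seed $(A_i,B_i)$ in the full ring, forms the translates $f^l(A_i),f^l(B_i)$ for $0\le l\le s-1$ only, and asserts invariance under $f$ ``by the construction''; you instead force the seed into the subring $\GR(4^s)$, so that $f^s$ fixes it pointwise and the stabiliser/orbit count comes out exactly right. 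This difference matters: with a seed in the full ring, applying $f$ to the translate $l=s-1$ produces $\big(f^s(A_i),f^s(B_i)\big)=\big(\xi^{(2^s-1)i(A_i)}A_i,\ \xi^{(2^s-1)i(B_i)}B_i\big)$, which lies in the constructed family only if $i(A_i)\equiv i(B_i)\pmod{(2^\Delta-1)/(2^s-1)}$ --- automatic when $s=\Delta$ (hence for prime $\Delta$), but not guaranteed by Theorem~\ref{nd3} for a proper divisor $s$ of a composite $\Delta$ such as $\Delta=25$, $s=5$. Your subring device makes both $i$-values multiples of $(2^\Delta-1)/(2^s-1)$ and so settles this wrap-around step cleanly; the only extra obligation it incurs --- that the size-$s$ cosets of $\ZZ_{2^\Delta-1}$ are exactly the images of the size-$s$ cosets of $\ZZ_{2^s-1}$ under $j\mapsto j\cdot(2^\Delta-1)/(2^s-1)$, compatibly with $j(\cdot)$ and with the negation involution --- you correctly discharge via observation $(*)$ in the proof of Proposition~\ref{N_s}. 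The remaining bookkeeping (the orbit-counting principle, the $3\mid s$ case via Theorem~\ref{d3}, and the separation of the two invariance regimes) matches the paper, so your argument buys a more robust justification of the automorphism-invariance claim at the cost of one additional subring identification.
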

\begin{proof}
It is easy to see that $(\TT \cup 2\TT \cup 3\TT)\backslash\{0\}$ can partition into $2^\Delta - 1$ triples $\{ \xi^i, 2\xi^i, 3\xi^i\}$, $i=0,\ldots,2^\Delta - 2$, which is invariant under the multiplication by $\xi$.

It remains to partition $\RR \backslash(\TT \cup 2\TT \cup 3\TT)$ into
$(2^\Delta - 1)(2^\Delta - 2)/6$ sixtuples of the required form.
We first prove the following:
\begin{itemize}
 \item[(*)] \emph{There are $(2^\Delta - 2)/6$ pairs $(A,B)$,
 $A,B \in \RR \backslash(\TT \cup 2\TT \cup 3\TT)$ such that
 every $j$ in $\{1,\ldots,2^\Delta - 2\}$
 is uniquely represented as
 $j(A)$, $j(B)$, $j(-A-B)$,
 $j(-A)$, $j(-B)$, or $j(A+B)$ for one of these pairs.}
\end{itemize}

To prove (*), we split the set $\{1,\ldots,2^\Delta - 2\}$ into the sets
$\bigcup_{\mathcal{A} \in \Cos(s)} \mathcal{A}$ and for each $s$ construct $N_s\cdot s/6$ pairs
that cover the corresponding set in the same sense
as in (*). This step is divided into two subcases,
based on $3$ divides $s$ or not.

\begin{itemize}
 \item[(i)] \emph{If $s>1$ is not divisible by $3$,
 then there are $N_s\cdot s/6$ pairs $(A,B)$,
 $A,B \in \RR \backslash(\TT \cup 2\TT \cup 3\TT)$ such that
 every $j$ in {$\bigcup_{\mathcal{A} \in \Cos(s)} \mathcal{A}$}
 is uniquely represented as
 $j(A)$, $j(B)$, $j(-A-B)$,
 $j(-A)$, $j(-B)$, or $j(A+B)$ for one of these pairs.}
 \item \emph{Proof of}~(i). By Theorem \ref{nd3}, there are
$A_1$, $B_1$, \ldots, $A_k$, $B_k$,
where $k=N_s/6$,
such that $j(A_i)$, $j(B_i)$, $j(-A_i-B_i)$,
$j(-A_i)$, $j(-B_i)$, $j(A_i+B_i)$, $i=1,\ldots,k$, represent all cosets from $\CCo(s)$. In other words, $\bigcup_{\mathcal{A} \in \Cos(s)} \mathcal{A}$ can split into such sets $\pm \mathcal{A}_i\cup \pm \mathcal{B}_i\cup \pm \mathcal{C}_i$, where $i=1,\ldots,k$. Without loss of generality, suppose $j(A_i)\in \mathcal{A}_i$, $j(B_i)\in \mathcal{B}_i$ and $j(A_i+B_i)\in \mathcal{C}_i$, where $i=1,\ldots,k$. Let $f$ be the automorphism of $\GR(4^\Delta)$ defined in Section~\ref{s2.1}.
Denote $f^l(A_i)$, $f^l(B_i)$ by $A^{(l)}_i$, $B^{(l)}_i$,  where $l=0,1,2,\ldots,s-1$. Since $f^l$ is a homomorphism,
we have
\begin{IEEEeqnarray*}{c}
 j(A^{(l)}_i)=2^lj(A_i),\quad j(B^{(l)}_i)=2^lj(B_i), \quad \mbox{and}\\
j(A^{(l)}_i+B^{(l)}_i)=j((A_i+B_i)^{(l)})=2^lj(A_i+B_i).$$
\end{IEEEeqnarray*}
It is easy to see that
\begin{IEEEeqnarray*}{rClCrCl}
 2^lj(A_i)     &\in& \mathcal{A}_i,& \quad & 2^lj(-A_i)     &\in& -\mathcal{A}_i, \\
2^lj(B_i)      &\in& \mathcal{B}_i,& \quad & 2^lj(-B_i)     &\in& -\mathcal{B}_i, \\
2^lj(A_i+B_i)  &\in& \mathcal{C}_i,& \quad & 2^lj(-A_i-B_i) &\in& -\mathcal{C}_i.
\end{IEEEeqnarray*}
More precisely,
\begin{IEEEeqnarray*}{rClCrCl}
\mathcal{A}_i&=&\{2^lj(A_i)\}_l,     & \quad &  -\mathcal{A}_i&=&\{2^lj(-A_i)\}_l,   \\
\mathcal{B}_i&=&\{2^lj(B_i)\}_l,     & \quad &  -\mathcal{B}_i&=&\{2^lj(-B_i)\}_l,   \\
\mathcal{C}_i&=&\{2^lj(A_i+B_i)\}_l, & \quad &  -\mathcal{C}_i&=&\{2^lj(-A_i-B_i)\}_l
\end{IEEEeqnarray*}
with $l=0,1,2,\ldots,s-1$. It is not difficult to find that
$\bigcup_{\mathcal{A} \in \Cos(s)} \mathcal{A}$ can be expressed as
\begin{IEEEeqnarray*}{rl}
\bigcup_{i=1}^{k}
\bigcup_{l=0}^{s-1}
\Big\{\, &
j\big(A^{(l)}_i\big),\ j\big(B^{(l)}_i\big),\ j\big(A^{(l)}_i+B^{(l)}_i\big),
\\ &
j\big(-A^{(l)}_i\big),\ j\big(-B^{(l)}_i\big),\ j\big(-A^{(l)}_i-B^{(l)}_i\big)
\,\Big\}.
\end{IEEEeqnarray*}
Hence, these pairs $(A^{(l)}_i,B^{(l)}_i)$, where $1\leq i\leq k$, $0\leq l\leq s-1$, satisfy the conclusion of~(i).
\end{itemize}
\begin{itemize}
 \item[(ii)] \emph{If $s$ is  divisible by $3$,
 then for every $\mathcal{A}$ in $\Cos(s) $ there are $s/3$ pairs $(A,B)$,
 $A,B \in \RR \backslash(\TT \cup 2\TT \cup 3\TT)$ such that
 every $j$ in $\pm\mathcal{A}$
 is uniquely represented as
 $j(A)$, $j(B)$, $j(-A-B)$,
$j(-A)$, $j(-B)$, or $j(A+B)$  for one of these pairs.}
 \item[] \emph{Proof of}~(ii). By Theorem~\ref{d3}, for any $\mathcal{A}$ in $\Cos(s)$, there exist $A,B\in \RR$ such that
 \begin{IEEEeqnarray}{rCcCl}
              j(-A-B) & = & 2^\delta j(B) & = & 2^{2\delta}j(A)  \label{eq--}  \\
\mbox{or}\quad j(A+B) & = & 2^\delta j(B) & = & 2^{2\delta}j(A), \label{eq++}
 \end{IEEEeqnarray}
 where $\delta=s/3$ and $j(A)=j\in \mathcal{A}$.
 We suppose \eqref{eq--} holds
 (case~\eqref{eq++} in similar). Let $f$ be the automorphism of $\GR(4^\Delta)$ defined in
 Section~\ref{s2.1}. Denote $f^l(A)$ by $A^{(l)}$, where $l=0,1,2,\ldots,s-1$. Since $f^l$ is a homomorphism, we have
 \begin{IEEEeqnarray*}{rClCrCl}
 j(A^{(l)}) &=&2^lj(A), & &
 j(-A^{(l)})&=&2^lj(-A), \\
 j(B^{(l)}) &=&2^{\delta+l}j(A), &\  &
 j(-B^{(l)})&=&2^{\delta+l}j(-A),   \\
 j(-A^{(l)}-B^{(l)})&=&2^{2\delta+l}j(A), &\  &\\ &&&&
  \makebox[0mm][r]{and \ $j(A^{(l)}+B^{(l)})$} &=&2^{2\delta+l}j(-A).
 \end{IEEEeqnarray*}
 For such $A$, we have
 \begin{IEEEeqnarray*} {rCl}
  \mathcal{A} &=& \{2^lj(A): 0\leq l\leq s-1\} \\
    &=& \bigcup_{i=0,1,2}\big\{2^{l+i\delta}j(A): 0\leq l\leq \delta-1\big\},
 \end{IEEEeqnarray*}
     and (see also  Lemma~\ref{j(-A)})
     \begin{IEEEeqnarray*}{rCl}
       -\mathcal{A} &=&   \{2^lj(-A): 0\leq l\leq s-1  \} \\
        &=& \bigcup_{i=0,1,2} \big \{2^{l+i\delta}j(-A): 0\leq l\leq \delta-1\big\}.
     \end{IEEEeqnarray*}
 It is easy to see now that
      $$\mathcal{A}=\bigcup_{0\leq l\leq \delta-1}\big\{j(A^{(l)}), j(B^{(l)}), j(-A^{(l)}-B^{(l)})\big\},$$
      and
      $$-\mathcal{A}=\bigcup_{0\leq l\leq \delta-1}\big\{j(-A^{(l)}), j(-B^{(l)}), j(A^{(l)}+B^{(l)})\big\}.$$
Therefore, the pairs $(A^{(l)},B^{(l)})$, where $0\leq l\leq \delta-1$,  satisfy the conclusion of~(ii).
\end{itemize}
Since $\{1,\ldots,2^\Delta - 2\}$ is divided into the sets considered
in (i) and (ii), the proof of (*) is completed. Then, for such $(2^\Delta-2)/6$ pairs $(A,B)$, the pairs $(xA,xB)$ can do the same things, where $x\in \TT\setminus \{0\}$.
Therefore,  $$\big\{{\pm x}A_i,\,\pm xB_i,\,\pm x(A_i+B_i)\big\}_{1\leq i\leq (2^\Delta-2)/6,\  x\in \TT\setminus \{0\}}$$ coincides with $\RR \backslash(\TT \cup 2\TT \cup 3\TT)$.
It remains to observe that the following
facts which guarantee the required symmetries
of the constructed partition.
At first, $\TT\setminus \{0\}$
is invariant under the multiplication by $\xi$.
At second, if $s>1$ is not divisible by $3$,
then all nonempty $\Cos(s)$ fall into Case~(i),
and by the construction there
the resulting partition is invariant
under the ring automorphisms. At last, if $s$ is divisible by $3$, then every $\pm \mathcal{A}$ from $\Cos(s)$ fall into Case~(ii), which is also invariant
under the ring automorphisms.
\end{proof}

\section[Quasi-cyclic 1-perfect codes]{Quasi-cyclic 1-perfect codes}\label{s4}
By Lemma~\ref{l:Aperf}, the partition constructed in the previous section
corresponds to a $1$-perfect code in
$D\big((2^\Delta-1)\frac{2^\Delta-2}{6},(2^\Delta-1)\big)$.
The set of columns of the check matrix is invariant
under multiplication by~$\xi$,
and this multiplication
induces a permutation of the coordinates
consisting of cycles
of order~$(2^\Delta-1)$.
\begin{corollary}\label{c:main}
Let $\Delta$ be odd.
Let $\xi$ be a root of any basic primitive irreducible polynomial with degree~$\Delta$ over~$\ZZ_4$.
Let $H$ be the $\Delta\times \frac{2^{2\Delta}-1}{3}$ matrix over~$\ZZ_4$ consisting of $(2^\Delta-1)\frac{(2^\Delta-2)}{6}$ pairs of columns
$(a,b)$, where $\{ \pm a, \pm b, \pm(a+b) \}$
are sixtuples from Theorem~\ref{th:partition},
in the left part and $2^\Delta-1$ columns $\xi^i$, $0\leq i \leq 2^\Delta-2$,
in the right part.
The following holds:
\begin{itemize}
 \item the code~$C$ defined by the check matrix~$H$ is a $1$-perfect code in $D\big((2^\Delta-1)\frac{2^\Delta-2}{6},(2^\Delta-1)\big)$;
 \item
the coordinate permutation that corresponds to multiplying each column by~$\xi$
is an automorphism of the code;
\item if $\Delta$ is not divisible
by~$3$, then the coordinate permutation
that corresponds to a ring automorphism
of~$\GR(4^\Delta)$.
\end{itemize}
\end{corollary}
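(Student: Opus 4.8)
The plan is to obtain Corollary~\ref{c:main} by combining Theorem~\ref{th:partition} with Lemma~\ref{l:Aperf}; the statement is essentially a repackaging of the partition as a parity-check matrix, so no genuinely new argument is needed. For $1$-perfectness, first I would check that the $\ZZ_4$-submodule $Q$ spanned by the columns of $H$ is all of $\GR(4^\Delta)$: the right block of $H$ contains $\xi^0,\xi^1,\dots,\xi^{\Delta-1}$, which form a free $\ZZ_4$-basis of $\GR(4^\Delta)=\ZZ_4[\xi]$. Next I would match the two families of sets in Lemma~\ref{l:Aperf} with the blocks of the partition supplied by Theorem~\ref{th:partition}: the left block of $H$ contributes, for each sixtuple $\{\pm a,\pm b,\pm(a+b)\}$ of that partition, a consecutive pair of columns $a_{2i-1}=a$, $a_{2i}=b$ whose type-(I) set is exactly this sixtuple; the right block contributes the columns $\xi^0,\dots,\xi^{2^\Delta-2}$, each $\xi^i$ generating the type-(II) triple $\{\xi^i,2\xi^i,-\xi^i\}=\{\xi^i,2\xi^i,3\xi^i\}$, and these are precisely the $2^\Delta-1$ triples of the partition. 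Since the sets (I) and (II) form a partition of $Q\setminus\{0\}=\GR(4^\Delta)\setminus\{0\}$, Lemma~\ref{l:Aperf} gives that $C$ is $1$-perfect in $D\big((2^\Delta-1)\tfrac{2^\Delta-2}{6},\,2^\Delta-1\big)$.

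\emph{Permutation automorphisms.} Here $C=\{c:\ Hc^{\mathrm T}=0\}$. For a bijection $\mu$ of $\GR(4^\Delta)$ — either multiplication by $\xi$, or, when $\Delta\not\equiv0\bmod 3$, a ring automorphism $f^{l}$ — let $\mu(H)$ be the matrix obtained by applying $\mu$ to every column. The key claim is that $\mu(H)=HP_\mu$ for some column-permutation matrix $P_\mu$; granting it, $\ker H=\ker\mu(H)=\ker(HP_\mu)=P_\mu^{-1}\ker H$, so the coordinate permutation $P_\mu^{-1}$ is an automorphism of $C$. To prove the claim I would use the explicit description of the columns of $H$ from the proof of Theorem~\ref{th:partition}: the left-block columns are the pairs $(xA_i,xB_i)$ with $x$ ranging over $\TT\setminus\{0\}$ and $(A_i,B_i)$ over the $(2^\Delta-2)/6$ base pairs (which in Case~(i) are organized into $f$-orbits), while the right-block columns form $\TT\setminus\{0\}=\{\xi^0,\dots,\xi^{2^\Delta-2}\}$. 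Since $\mu$ fixes $\TT\setminus\{0\}$ setwise and permutes the base pairs up to scaling by elements of $\TT\setminus\{0\}$ — this is exactly the invariance of the partition asserted in Theorem~\ref{th:partition}, which for ring automorphisms is why we need $\Delta\not\equiv0\bmod 3$ (so that every nonempty $\Cos(s)$ falls into the ring-automorphism-invariant Case~(i)) — the multiset of columns of $\mu(H)$ coincides with that of $H$, which is the claim.

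\emph{Cycle structure and conclusion.} For $\mu=(\text{multiplication by }\xi)$ the map $x\mapsto\xi x$ is a single $(2^\Delta-1)$-cycle on $\TT\setminus\{0\}$; hence $P_\mu$ acts on the right block as one $(2^\Delta-1)$-cycle, and on the left block it cyclically shifts, for each base pair, the $2^\Delta-1$ columns $\{(xA_i,xB_i):x\in\TT\setminus\{0\}\}$ preserving the order within each pair, i.e.\ as two $(2^\Delta-1)$-cycles. There is no shorter cycle, since $\xi^{k}(xA_i)=xA_i$ forces $\xi^{k}=1$ ($A_i$ being a unit). Thus $P_\mu$ is a product of $1+2\cdot\tfrac{2^\Delta-2}{6}=\tfrac{2^\Delta+1}{3}$ disjoint $(2^\Delta-1)$-cycles (an integer, since $\Delta$ is odd), exhausting the $\tfrac{2^{2\Delta}-1}{3}$ coordinates; so $C$ is quasi-cyclic of index $2^\Delta-1$, and when $\Delta\not\equiv0\bmod 3$ the additional permutations $P_{f^{l}}^{-1}$ lie in $\mathrm{Aut}(C)$ as well.

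\emph{Main obstacle.} None of this is conceptually hard — all the substance is already in Theorem~\ref{th:partition}. The only step requiring real care is the column-permutation claim: one must verify that $\mu$ permutes the \emph{columns} of $H$, not merely its sixtuples (which could \emph{a priori} be represented by different generating pairs after applying $\mu$), and this forces a look inside the proof of Theorem~\ref{th:partition} — specifically at the way the base pairs are grouped into $f$-orbits and scaled by $\TT\setminus\{0\}$ — together with the easy check that the $\xi$-action has no short cycles. These are routine verifications.
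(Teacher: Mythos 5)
Your proposal is correct and follows essentially the same route as the paper, which proves the corollary in one short paragraph by invoking Lemma~\ref{l:Aperf} together with the invariance of the column set of $H$ under multiplication by $\xi$ (and, for $\Delta\not\equiv 0\bmod 3$, under the ring automorphisms). You merely make explicit the details the paper leaves implicit — that the span of the columns is all of $\GR(4^\Delta)$, that column-set invariance yields a kernel-preserving coordinate permutation, and the cycle-structure count — all of which check out.
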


The proof of the following proposition is the same as that in \cite[Sect.\,4.4]{SHK:additive}.
We omit the complete proof,
only mentioning the idea.
The invariant that distinguishes  the codes
is the number of codewords of weight~$3$
with all three nonzero positions in the right part of the codeword.
Every additive $1$-perfect code
in $D\big((2^\Delta-1)\frac{2^\Delta-2}{6},0+(2^\Delta-1)\big)$ has
$(2^\Delta-1)(2^\Delta-2)/6$ such codewords of order~$2$. The quasicyclic codes
have no such codewords or order~$4$, while the codes constructed recursively in
\cite[Sect.\,4.4]{SHK:additive} have.
\begin{proposition}
Each of the codes in
$D\big( (2^\Delta-1)\frac{2^\Delta-2}{6},0+(2^\Delta-1) \big)$ constructed above is not equivalent to any code constructed in \cite[Sect.\,3.3]{SHK:additive}, with the corresponding parameters.
\end{proposition}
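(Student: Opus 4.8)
The plan is to distinguish the quasicyclic codes of Corollary~\ref{c:main} from the recursively constructed codes of \cite[Sect.\,3.3]{SHK:additive} by exhibiting a code invariant that takes different values on the two families. Following the idea already used in \cite[Sect.\,4.4]{SHK:additive}, the invariant I would use is the number of weight-$3$ codewords whose three nonzero coordinates all lie in the ``Hamming'' part of the code, i.e.\ among the last $n''=2^\Delta-1$ coordinates; moreover one can refine this by recording the orders (in the additive group) of such codewords, distinguishing those of order $2$ from those of order $4$. Since equivalence of additive codes preserves both the weight of a codeword and its additive order (coordinate permutations and sign changes commute with multiplication by $2$), this refined count is an equivalence invariant.

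The key steps are as follows. First I would translate the weight-$3$-in-the-right-part condition into the language of the partition via Lemma~\ref{l:Aperf}: a codeword supported on three right-part coordinates $j_1,j_2,j_3$ with values $v_1,v_2,v_3\in\{1,2,3\}$ corresponds exactly to a relation $v_1 a_{j_1}+v_2 a_{j_2}+v_3 a_{j_3}=0$ among the columns, which for the quasicyclic code means $v_1\xi^{j_1}+v_2\xi^{j_2}+v_3\xi^{j_3}=0$ in $\GR(4^\Delta)$. Second, I would count these for the quasicyclic code: a relation of order $2$ comes from $2\xi^{j_1}+2\xi^{j_2}+2\xi^{j_3}=0$, equivalently $\overline\xi^{j_1}+\overline\xi^{j_2}+\overline\xi^{j_3}=0$ in $\FF_{2^\Delta}$, i.e.\ an unordered triple of distinct elements of $\FF_{2^\Delta}^*$ summing to $0$; the number of such triples is $(2^\Delta-1)(2^\Delta-2)/6$, matching the claimed value. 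Then I would check that there are \emph{no} weight-$3$ codewords of order $4$ supported on the right part: such a codeword would need (after rescaling) a relation $\xi^{j_1}+\xi^{j_2}+2\xi^{j_3}=0$ or $\xi^{j_1}+\xi^{j_2}+\xi^{j_3}=0$ in $\GR(4^\Delta)$ with the first two terms in the Teichmuller set $\TT$; reducing mod $2$ forces $\overline\xi^{j_1}=\overline\xi^{j_2}$ hence $j_1=j_2$, contradicting distinctness (and in the three-unit case one uses Yamada's formula, Lemma~\ref{Yamada}, to see the sum is never zero for distinct units). Third, I would recall from \cite[Sect.\,3.3--4.4]{SHK:additive} that the recursive construction does produce weight-$3$ right-part codewords of order $4$ (these arise from the recursive gluing of smaller perfect codes), so its refined count differs; since the invariant differs, the codes are inequivalent.

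The main obstacle I expect is not the counting itself but making precise and self-contained the claim about the recursive codes of \cite[Sect.\,3.3]{SHK:additive}, since that requires unpacking their construction enough to verify that an order-$4$ weight-$3$ right-part codeword really occurs there; the authors sidestep this by saying ``the proof is the same as \cite[Sect.\,4.4]{SHK:additive}'' and only sketching the idea, which is the pragmatic choice here. A secondary point requiring care is the case analysis on additive orders of the candidate weight-$3$ relations (orders $2$ and $4$, and the various patterns of values in $\{1,2,3\}$ modulo the $\pm$ symmetry), but each case reduces to a short computation in $\FF_{2^\Delta}$ or an application of Lemma~\ref{Yamada}. Thus I would present the argument as: define the refined invariant, compute it exactly for the quasicyclic family (getting $(2^\Delta-1)(2^\Delta-2)/6$ order-$2$ codewords and zero order-$4$ codewords of the relevant type), cite the corresponding computation for the recursive family from \cite{SHK:additive}, and conclude inequivalence.
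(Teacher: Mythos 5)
Your proposal uses exactly the invariant the paper uses (the number of weight-$3$ codewords supported on the right part, refined by additive order, with the quasicyclic codes having none of order $4$ while the recursive codes of \cite{SHK:additive} do), and your computations in $\GR(4^\Delta)$ fill in correctly the details the paper leaves to the citation of \cite[Sect.\,4.4]{SHK:additive}. This is the same approach as the paper's (sketched) proof, carried out in somewhat more detail.
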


\section{Concluding remarks}\label{s5}
In this paper, we have shown how to construct
additive $1$-perfect codes in a Doob graph
that are invariant under a quasicyclic
permutation of coordinates of order $2^\Delta-1$.
  If $\Delta$ is not divisible by~$3$,
 then the set of columns of the check matrix
 in Corollary~\ref{c:main}
 is also invariant under the ring automorphism of order~$\Delta$. This guarantees that the code is even more symmetric
 than is declared in Corollary~\ref{c:main}. For example, since $5$ and~$2^5-1$
 are coprime, the code in $D(155,31)$ has a permutation automorphism of order
 $155=5\cdot(2^5-1)$, which cyclically permutes each of the two groups of coordinates.
 Such a code can be called \emph{cyclic}, in the sense of~\cite{BorFer:1cyclic}.
 The problem of existence of cyclic $1$-perfect codes in
 $D\big((2^\Delta-1)\frac{2^\Delta-2}{6},(2^\Delta-1)\big)$ for $\Delta\ge 7$ remains open.

Another interesting direction is to further study partitions of
modules over~$\ZZ_4$ like the partitions we constructed in Section~\ref{s:part}, finding their place in discrete geometry,
generalizing to other rings, and seeking their applications in coding theory.
Note that, as shown in~\cite{HerSho71}, additive $1$-perfect codes over non-prime finite fields are related to spreads,
and this relation is similar as the relation
between perfect codes in
Doob graph and partitions of $\GR(4^\Delta)$
considered in the current paper.
Spreads are widely studied objects~\cite{Johnson:spreads},
and we believe that different generalizations to finite rings,
including the one considered in the current paper, deserve attention.

\newpage

\end{document}